\crefname{hypothesis}{Hypothesis}{Hypotheses}
\crefname{fact}{Fact}{Facts}
\title{Stochastic versus Deterministic in Stochastic Gradient Descent\thanks{Submitted to the editors DATE.
\funding{The work of Runze Li and Wenxun Xing was supported by the National Natural Science Foundation of China Grant No. 11771243. The work of Jintao Xu was supported in part by PolyU postdoc matching fund scheme of the Hong Kong Polytechnic University Grant No. 1-W35A, and Huawei's Collaborative Grants ``Large scale linear programming solver'' and ``Solving large scale linear programming models for production planning''. }}}
\author{Runze Li \thanks{Department of Mathematical Sciences, Tsinghua University, Beijing 100084, China (\email{li-rz22@mails.tsinghua.edu.cn}, \email{wxing@tsinghua.edu.cn}).}
\and Jintao Xu\thanks{Corresponding author. Department of Applied Mathematics, The Hong Kong Polytechnic University, Hong Kong, China (\email{xujtmath@163.com}).}
\and Wenxun Xing\footnotemark[2]}
\begin{document}

\maketitle

\begin{abstract}
This paper theoretically reanalyzes the convergence of the mini-batch stochastic gradient descent (SGD) for a structured minimization problem involving a finite-sum function with its gradient being stochastically approximated, and an independent term with its gradient being deterministically computed.  Rather than collapsing this problem into a standard finite-sum formulation and treating all components uniformly, we study it from a stochastic versus deterministic viewpoint and focus on how these two gradient computations affect mini-batch SGD differently. The step size, the convergence rate, and the radius of the convergence region depend asymmetrically on the characteristics of the two components, which shows the distinct impacts of stochastic approximation versus deterministic computation in the mini-batch SGD. Based on this, we show that our analysis yields a faster convergence rate and a smaller radius of the convergence region. Moreover, an even better convergence rate can be obtained when the independent term endows the objective function with sufficient strong convexity. Also, the convergence rate of our algorithm in expectation approaches that of the classic gradient descent when the batch size increases. Numerical experiments are conducted to support the theoretical analysis as well.
\end{abstract}

\begin{keywords}
Structured optimization, Mini-batch stochastic gradient, Linear convergence
\end{keywords}

\begin{MSCcodes}
90C06, 90C30, 68W20
\end{MSCcodes}

\section{Introduction}

In machine learning, some problems can be formulated as follows \cite{shalev2014understanding}
\begin{equation}\label{eq:opt1}
	\min _{\boldsymbol{x} \in \mathbb{R}^d} \quad \psi(\boldsymbol{x}) := F(\boldsymbol{x})+h(\boldsymbol{x}),
\end{equation}
where $F(\boldsymbol{x}):=\frac{1}{n} \sum_{i=1}^{n} f_i(\boldsymbol{x})$ is the empirical risk that depends on a dataset with a large size $n$, while $h(\boldsymbol{x})$ represents another objective term, such as a regularizer, which is generally independent of the data size. This formulation is referred to as the structured optimization problem in this paper. Although \eqref{eq:opt1} allows  $f_i(\boldsymbol{x})$, $i=1,\dots,n$ and $h(\boldsymbol{x})$ to be nonsmooth \cite{bolte2021conservative,metel2021stochastic}, we focus on the setting in which both are smooth. The detailed background of \eqref{eq:opt1} is provided in \Cref{sec:2}.

Early research addressed \eqref{eq:opt1} with deterministic optimizers.  A classic approach is the gradient descent, whose update is
\begin{equation}
	\boldsymbol{x}_{k+1} = \boldsymbol{x}_{k} - \eta_k \left(\frac{1}{n}\sum_{i=1}^n \nabla f_i(\boldsymbol{x}_k) +\nabla h(\boldsymbol{x}_k)\right).
\end{equation}
This algorithm and its variants have been studied widely in theory and practice \cite{nesterov2018lectures,drori2014performance}.  As the data size~$n$ grows, computing the full gradient $\tfrac{1}{n}\sum_{i=1}^{n}\nabla f_i(\boldsymbol{x})$ becomes expensive, leading to increased use of stochastic algorithms.  Among them, the stochastic gradient descent (SGD), which is based on the stochastic approximation proposed by Robbins and Monro \cite{robbins1951stochastic}, is widely used.  Its core idea is to estimate the gradient at each iteration with a single sample or a small batch, thereby cutting the per-iteration cost. Beyond SGD, many stochastic optimization methods have been proposed to take advantage of the specific problem structures.  Representative examples include proximal SGD \cite{xiao2014proximal}, accelerated stochastic approximation \cite{ghadimi2012optimal}, and randomized block-coordinate descent \cite{richtarik2014iteration,fercoq2015accelerated}. These algorithms leverage structural properties of the objective function and are supported by sound convergence analysis.

Despite the existence of various algorithms, vanilla SGD remains a popular choice. As shown in \cite{zhang2023dive}, it is widely applied to large-scale instances of \eqref{eq:opt1} in practice and continues to attract research interest. Its popularity stems from the simplicity, computational efficiency, and seamless integration into mainstream machine learning frameworks such as TensorFlow \cite{abadi2016tensorflow} and PyTorch \cite{paszke2019pytorch}.

Therefore, we study the performance of the following mini-batch SGD for the problem \eqref{eq:opt1}.  
The iterative scheme is given by  
\begin{equation}\label{eq:mini_batch_sgd}
	\boldsymbol{x}_{k+1} = \boldsymbol{x}_{k} - \eta_k \Bigg(\frac{1}{B}\sum \limits_{j=1}^{B} \frac{1}{\textstyle np_{\scriptscriptstyle \xi_{k,j}}}\nabla f_{\xi_{k,j}}(\boldsymbol{x}_k)+\nabla h(\boldsymbol{x}_k)\Bigg),
\end{equation}
where $B$ is the batch size, and $\xi_{k,j}$, $j=1,\ldots,B$, are independent random variables taking values in $\{1,\ldots,n\}$, with $\mathbb{P}(\xi_{k,j}=i)=p_i>0$ for $i=1,\ldots,n$ and $j=1,\ldots,B$. The mini-batch gradient in \eqref{eq:mini_batch_sgd} is an unbiased estimator of the full gradient  
$\nabla F(\boldsymbol{x})=\frac{1}{n}\sum_{i=1}^{n}\nabla f_i(\boldsymbol{x})$, that is,
$\mathbb{E}\Big[\frac{1}{B}\sum_{j=1}^{B} \tfrac{1}{\textstyle np_{\scriptscriptstyle \xi_{k,j}}}\nabla f_{\xi_{k,j}}(\boldsymbol{x})\Big]=\frac{1}{n}\sum_{i=1}^{n} \nabla f_i(\boldsymbol{x})$, where the expectation is taken over $\xi_{k,1},\ldots,\xi_{k,B}$.  
As shown in \eqref{eq:mini_batch_sgd}, when applying the mini-batch SGD to solve the problem \eqref{eq:opt1}, the gradient of $F(\boldsymbol{x})$ is estimated stochastically (by sampling from $\{\nabla f_i(\boldsymbol{x})\}_{i=1}^n$), while the gradient of $h(\boldsymbol{x})$ is computed deterministically. This asymmetric treatment of $F(\boldsymbol{x})$ and $h(\boldsymbol{x})$ motivates the following question:

\vspace{1mm}
\textit{How do the stochastic approximation of  $\nabla F(\boldsymbol{x})$ and the deterministic computation of $\nabla h(\boldsymbol{x})$ affect the performance of the mini-batch SGD for the problem \eqref{eq:opt1} differently, and how can this impact be analyzed and explained theoretically?}
\vspace{1mm}

Answering this question serves two purposes. First, it clarifies how the two components of the objective function influence step size selections and the convergence of the algorithm, offering guidance for tuning hyperparameters.  
Second, it offers inspiration for model formulation. For example, in practice, one may consider reassigning terms between $F(\boldsymbol{x})=\tfrac{1}{n}\sum_{i=1}^n f_i(\boldsymbol{x})$ and $h(\boldsymbol{x})$ to better match the behavior of the optimization algorithm.

In the literature we have surveyed, relatively limited attention has been devoted to this issue, possibly for the following reason.  Observe that the problem \eqref{eq:opt1} can be rewritten as
\begin{equation}\label{eq:opt2}
	\min_{\boldsymbol{x} \in \mathbb{R}^d} \quad \psi(\boldsymbol{x}) := \frac{1}{n} \sum_{i=1}^{n} \psi_i(\boldsymbol{x}),
\end{equation}
where $\psi_i(\boldsymbol{x})=f_i(\boldsymbol{x})+h(\boldsymbol{x})$ for $i=1,\ldots,n$.  This reformulation shows that many existing convergence results for SGD applied to \eqref{eq:opt2} can be transferred directly to \eqref{eq:opt1}.  Several studies such as \cite{schmidt2017minimizing} and \cite{nguyen2018sgd} make similar observations for the problems they study.

To clarify how previous work relates to the question we raised, we review two categories of convergence analysis for SGD.  The first category makes explicit assumptions about the stochastic gradients, whereas the second category does not make such assumptions, except for those related to how the gradients are sampled. Although some assumptions in the second category may imply those in the first, the two categories of results are typically stated in different ways, so we discuss them separately. To facilitate the discussion, let $\{\boldsymbol{x}_k\}_{k=1}^{K}$ be the sequence generated by the iteration \eqref{eq:mini_batch_sgd}, and let $\mathcal{F}_{k}$ denote the $\sigma$-field generated by $\{\boldsymbol{x}_1,\ldots,\boldsymbol{x}_{k}\}$ for $k=1,\ldots,K$.  Let $\boldsymbol{G}_k$ and $\boldsymbol{g}_k$ be the stochastic gradients of $\psi(\boldsymbol{x})$ and $F(\boldsymbol{x})$ computed at iteration $k$, respectively. Unless stated otherwise, we consider these stochastic gradients to be unbiased and all convergence results are understood in the sense of expectation. Moreover, we say that $\psi(\boldsymbol{x})$ has Lipschitz continuous gradient with constant $L$  when
$\|\nabla \psi(\boldsymbol{x})-\nabla \psi(\boldsymbol{y})\|_2 \le L \|\boldsymbol{x}-\boldsymbol{y}\|_2$. We say that $\psi(\boldsymbol{x})$ is strongly convex with convexity parameter $\lambda>0$ when $\psi(\boldsymbol{x})-\frac{\lambda}{2}\|\boldsymbol{x}\|_2^2$ is a convex function.

Rakhlin et al. \cite{rakhlin2011making} assumed that the second moment of the stochastic gradient is bounded, that is, there is a constant $M$ such that
$\mathbb{E}[\|\boldsymbol{G}_k\|_2^2 \mid \mathcal{F}_k] \leq M$, $k=1,\dots,K$. They showed that if $\psi(\boldsymbol{x})$ is strongly convex and its gradient is Lipschitz continuous, then choosing the step size as $\eta_k = 1/(\lambda k)$ yields
$\psi(\boldsymbol{x}_k) - \psi(\boldsymbol{x}^*) = \mathcal{O}\bigl(1/k\bigr),$
where $\boldsymbol{x}^*$ is the minimizer of \eqref{eq:opt2} and $\lambda$ is the convexity parameter. Similar results appeared in \cite{nemirovski2009robust}. Lan \cite{lan2020first} studied the stochastic mirror descent, of which SGD is a special case, under the bounded variance assumption $\mathbb{E}\bigl[\|\boldsymbol{G}_k - \nabla \psi(\boldsymbol{x}_k)\|_2^2 \mid \mathcal{F}_k\bigr] \le M$, $k=1,\dots,K$. The author provided convergence results when $\psi(\boldsymbol{x})$ is a non-smooth convex Lipschitz function (using subgradients), a smooth convex function, or a strongly convex function. Bottou et al. \cite{bottou2018optimization} conducted their analysis under a relaxed growth condition. They assumed that there exist constants $M_G$ and $M$ such that
$\mathbb{E}[\|\boldsymbol{G}_k\|_2^2 \mid \mathcal{F}_k] \leq M_G \|\nabla \psi(\boldsymbol{x}_k)\|_2^2 + M$. When $\psi(\boldsymbol{x})$ is strongly convex with Lipschitz continuous gradient and a constant step size $\eta_k \equiv 1/(L M_G)$ is adopted, the sequence $\{\psi(\boldsymbol{x}_k)\}$ converges linearly to a neighborhood of $\psi(\boldsymbol{x}^*)$, where $L$ denotes the Lipschitz constant. They further demonstrated that, under the same conditions, the sequence $\{\psi(\boldsymbol{x}_k)\}$ converges to $\psi(\boldsymbol{x}^*)$ at a rate of $\mathcal{O}(1/k)$ when the step size is chosen as $\eta_k = \frac{\alpha}{\beta + k}$, where $\alpha$ and $\beta$ are suitable hyperparameters. For simplicity, a thorough overview of alternative stochastic gradient assumptions and the convergence analysis of SGD under broader conditions is omitted. For readers seeking more in-depth coverage, please see \cite{lan2020first,bottou2018optimization,wang2023convergence,hu2020biased}. This category of results depends on parameters in the stochastic gradient assumptions. Note that we have $\boldsymbol{G}_k = \boldsymbol{g}_k + \nabla h(\boldsymbol{x}_k)$ for the problem \eqref{eq:opt1}. Directly applying these results to \eqref{eq:opt1} would require treating $\nabla h(\boldsymbol{x}_k)$ as part of the stochastic gradient and thus including $h(\boldsymbol{x})$ in the assumptions. However, since the gradient of $h(\boldsymbol{x})$ is computed deterministically, it is more natural to impose stochastic gradient assumptions only on $\boldsymbol{g}_k$.

For another category of work, we first review studies on specific models of the problem \eqref{eq:opt1}. Zhang \cite{zhang2004solving} considered linear prediction methods with an $\ell_2$ regularizer. It is proved that SGD with a fixed step size converges linearly to a neighborhood of the minimizer. Shalev-Shwartz et al. \cite{shalev2011pegasos} studied the primal support vector machine and showed that with probability at least $1-\delta$,
$\frac{1}{K}\sum_{k=1}^K \psi(\boldsymbol{x}_k) - \psi(\boldsymbol{x}^*) = \mathcal{O}\bigl(\log(K/\delta)/K\bigr)$. Other works considered the problem \eqref{eq:opt2} in the general setting where each $\psi_i(\boldsymbol{x})$ satisfies some smoothness and convexity assumptions. Schmidt and Le Roux \cite{schmidt2013fast} assumed a strong growth condition,
$\max_i \|\nabla \psi_i(\boldsymbol{x})\|_2 \le M_G \|\nabla \psi(\boldsymbol{x})\|_2$.
They showed that if $\psi(\boldsymbol{x})$ is convex with Lipschitz continuous gradient, then with a constant step size $\eta_k \equiv 1/(L M_G^2)$, $\psi(\boldsymbol{x}_k)-\psi(\boldsymbol{x}^*) = \mathcal{O}(1/k)$, where $L$ is the Lipschitz constant. If $\psi(\boldsymbol{x})$ is also strongly convex, the rate is linear. Bach and Moulines \cite{moulines2011non} proved that if $\psi(\boldsymbol{x})$ is strongly convex with Lipschitz continuous gradient, then with a constant step size, the sequence $\{\boldsymbol{x}_k\}$ converges linearly to a neighborhood of $\boldsymbol{x}^*$. They also showed that the rate is $\mathcal{O}(1/k)$ when $\eta_k$ is proportional to $1/k$. Similar results appeared in \cite{nguyen2018sgd} and \cite{gower2019sgd}. This category of results depends on parameters such as Lipschitz constants. When $\psi_i(\boldsymbol{x}) = f_i(\boldsymbol{x})+h(\boldsymbol{x})$ meets these smoothness and convexity assumptions, their results can be applied to the problem \eqref{eq:opt1}. But since $\psi_i(\boldsymbol{x})$ includes both $f_i(\boldsymbol{x})$ and $h(\boldsymbol{x})$, applying these analyses directly treats $f_i(\boldsymbol{x})$ and $h(\boldsymbol{x})$ uniformly, which overlooks the asymmetric roles that stochastic and deterministic gradient computations play in SGD. See \Cref{sec:4} for a detailed discussion.

In this paper, we take the above question as the starting point of our study. 
	Rather than collapsing $\psi(\boldsymbol{x})=F(\boldsymbol{x})+h(\boldsymbol{x})$ into the standard finite-sum
	$\frac{1}{n}\sum_{i=1}^{n}\bigl(f_i(\boldsymbol{x})+h(\boldsymbol{x})\bigr)$ 
	and treating all components uniformly, we preserve the structured decomposition $F(\boldsymbol{x})+h(\boldsymbol{x})$ and analyze the mini-batch SGD according to how the two gradient components are computed in practice: $\nabla F(\boldsymbol{x})$ is stochastically approximated, whereas $\nabla h(\boldsymbol{x})$ is computed exactly. 
	This viewpoint reveals a stochastic-deterministic asymmetry that is hidden in standard SGD analyses. 
	Under the assumptions that $\psi(\boldsymbol{x})=\tfrac{1}{n}\sum_{i=1}^n f_i(\boldsymbol{x})+h(\boldsymbol{x})$ is strongly convex and that each $f_i(\boldsymbol{x})$, $i=1,\dots,n$, and $h(\boldsymbol{x})$ have Lipschitz continuous gradients, our contributions are as follows.

\begin{itemize}
	\item We introduce a stochastic-deterministic viewpoint for analyzing the mini-batch SGD on 
		$\psi(\boldsymbol{x})=F(\boldsymbol{x})+h(\boldsymbol{x})$. 
		The key idea is to distinguish the finite-sum $F(\boldsymbol{x})$ and the independent term $h(\boldsymbol{x})$ by the way their gradients are accessed, where $\nabla F(\boldsymbol{x})$ is sampled stochastically while $\nabla h(\boldsymbol{x})$ is computed exactly. 
		This viewpoint allows us to answer a question that is less apparent in standard finite-sum formulation: how do stochastic and deterministic gradient computations influence the mini-batch SGD differently? 
		In this sense, we provide a new perspective for understanding SGD on structured minimization problems.
	
	\item Based on this viewpoint, we establish linear convergence of the mini-batch SGD to a neighborhood of the minimizer, where the Lipschitz constants for the gradients of $f_i(\boldsymbol{x})$, $i=1,\ldots,n$, and $h(\boldsymbol{x})$ appear asymmetrically in the expressions for the step size, convergence rate, and radius of the convergence region. Based on this observation, we prove that reducing the Lipschitz constant for the gradients of $f_i(\boldsymbol{x})$, $i=1,\ldots,n$ yields a greater improvement in convergence rate compared to an equivalent reduction in the Lipschitz constant for the gradient of $h(\boldsymbol{x})$.
	
	\item Our distinct gradient analysis method offers sharper theoretical guarantees, and provides insights for model formulation and algorithm design. Specifically, we theoretically prove that for the same algorithm sequence, our analysis yields a faster convergence rate and a smaller radius of the convergence region compared to existing results, such as Theorem 5.8 from \cite{garrigos2023handbook}. Moreover, when $h(\boldsymbol{x})$ endows the objective function with sufficient strong convexity, we establish that the algorithm can achieve an even faster linear convergence rate by utilizing the specific structure of the problem.
	
	\item Our analysis also quantifies the role of batch size. 
	A smaller batch size leads to slower convergence and larger oscillations in the limiting neighborhood of the minimizer. 
	As the batch size increases, our convergence result approaches the classic linear convergence rate of full gradient descent in expectation. 
	
	\item Numerical experiments on logistic regression support these findings and further illustrate the distinct roles of $F(\boldsymbol{x})$ and $h(\boldsymbol{x})$.
\end{itemize}

The remainder of this paper is organized as follows. In \Cref{sec:2}, notations, some results from convex analysis, and the applications of the problem \eqref{eq:opt1} are provided. In \Cref{sec:3}, we present the convergence analysis of the mini-batch SGD and investigate how the stochastic approximation influences the algorithm.
In \Cref{sec:4}, we further discuss the asymmetric roles of the stochastic and deterministic gradients. Numerical results are reported in \Cref{sec:5}. Finally, conclusions are given in \Cref{sec:6}.

\section{Preliminaries}\label{sec:2}

This section introduces the notations and preliminary results used in our analysis. We then present several applications of the problem \eqref{eq:opt1}.

\subsection{Notations}

The set of $d$-dimensional real vectors is denoted by $\mathbb{R}^d$. The symbol $\boldsymbol{0}$ denotes the zero column vector with its size varying with the context. Let $\langle\cdot,\cdot\rangle$ and $\|\cdot\|_2$ denote the standard Euclidean inner product and its induced norm, respectively. The absolute value is denoted by $|\cdot|$. The probability measure is denoted by $\mathbb{P}$. Let $\mathbb{E}[\cdot]$ denote the expectation, and $\mathbb{E}[\cdot|\mathcal{F}]$ denote the conditional expectation with respect to the $\sigma$-field $\mathcal{F}$. 

\subsection{Smoothness and Strong Convexity}
\label{sec:2.2}
Let $\psi:\mathbb{R}^d \to \mathbb{R}$ be a continuously differentiable function.

\begin{definition}[Lipschitz continuous gradient \cite{nesterov2018lectures}]\label{def:Lip_con}
	A function $\psi$ has Lipschitz continuous gradient with constant $L>0$ or is called $L$-smooth if
	\[
	\|\nabla\psi(\boldsymbol{x})-\nabla\psi(\boldsymbol{y})\|_2 \le L\|\boldsymbol{x}-\boldsymbol{y}\|_2,
	\quad\text{for all }\boldsymbol{x},\boldsymbol{y}\in\mathbb{R}^d.
	\]
\end{definition}

\begin{lemma}[Lemma 1.2.3 \cite{nesterov2018lectures}]
	\label{lem:L-smooth1}
	Given that $\psi$ is $L$-smooth, the inequality
	\begin{equation}
		\label{eq:L-smooth1}
		\bigl|\psi(\boldsymbol{y})-\psi(\boldsymbol{x})-\langle\nabla\psi(\boldsymbol{x}),\boldsymbol{y}-\boldsymbol{x}\rangle\bigr|
		\le \frac{L}{2}\|\boldsymbol{y}-\boldsymbol{x}\|_2^{2}
	\end{equation}
	holds for all $\boldsymbol{x},\boldsymbol{y}\in\mathbb{R}^d$.
\end{lemma}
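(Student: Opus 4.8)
The plan is to prove the inequality by reducing the multivariate statement to a one–dimensional one along the segment joining $\mathbf{x}$ and $\mathbf{y}$, and then using the Lipschitz bound on the gradient together with the Cauchy--Schwarz inequality. Concretely, I would fix $\mathbf{x},\mathbf{y}\in\mathbb{R}^d$ and define the auxiliary function $\phi:[0,1]\to\mathbb{R}$ by $\phi(t)=\psi\bigl(\mathbf{x}+t(\mathbf{y}-\mathbf{x})\bigr)$. Since $\psi$ is continuously differentiable, $\phi$ is continuously differentiable on $[0,1]$ with $\phi'(t)=\langle\nabla\psi(\mathbf{x}+t(\mathbf{y}-\mathbf{x})),\,\mathbf{y}-\mathbf{x}\rangle$, so the fundamental theorem of calculus gives
\[
\psi(\mathbf{y})-\psi(\mathbf{x})=\phi(1)-\phi(0)=\int_{0}^{1}\langle\nabla\psi(\mathbf{x}+t(\mathbf{y}-\mathbf{x})),\,\mathbf{y}-\mathbf{x}\rangle\,\mathrm{d}t .
\]

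Next I would subtract the linear term. Writing $\langle\nabla\psi(\mathbf{x}),\mathbf{y}-\mathbf{x}\rangle=\int_0^1\langle\nabla\psi(\mathbf{x}),\mathbf{y}-\mathbf{x}\rangle\,\mathrm{d}t$ and combining the two integrals yields
\[
\psi(\mathbf{y})-\psi(\mathbf{x})-\langle\nabla\psi(\mathbf{x}),\mathbf{y}-\mathbf{x}\rangle
=\int_{0}^{1}\bigl\langle\nabla\psi(\mathbf{x}+t(\mathbf{y}-\mathbf{x}))-\nabla\psi(\mathbf{x}),\,\mathbf{y}-\mathbf{x}\bigr\rangle\,\mathrm{d}t .
\]
Taking absolute values, passing the modulus inside the integral, and applying the Cauchy--Schwarz inequality to the integrand bounds the right-hand side by $\int_0^1\|\nabla\psi(\mathbf{x}+t(\mathbf{y}-\mathbf{x}))-\nabla\psi(\mathbf{x})\|_2\,\|\mathbf{y}-\mathbf{x}\|_2\,\mathrm{d}t$.

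To finish, I would invoke $L$-smoothness with the pair of points $\mathbf{x}+t(\mathbf{y}-\mathbf{x})$ and $\mathbf{x}$, which gives $\|\nabla\psi(\mathbf{x}+t(\mathbf{y}-\mathbf{x}))-\nabla\psi(\mathbf{x})\|_2\le L\,\|t(\mathbf{y}-\mathbf{x})\|_2=Lt\,\|\mathbf{y}-\mathbf{x}\|_2$. Substituting this bound leaves $\int_0^1 Lt\,\|\mathbf{y}-\mathbf{x}\|_2^2\,\mathrm{d}t=\frac{L}{2}\|\mathbf{y}-\mathbf{x}\|_2^2$, which is exactly \eqref{eq:L-smooth1}.

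There is no serious obstacle here: the argument is a standard application of the integral remainder. The only points requiring a word of justification are that $\psi\in C^1$ makes $\phi'$ continuous so the fundamental theorem of calculus applies, and that moving the absolute value inside the integral is legitimate because the integrand is continuous on $[0,1]$; both are immediate. If one prefers to avoid integration entirely, an alternative is to define $g(\mathbf{y})=\psi(\mathbf{y})-\psi(\mathbf{x})-\langle\nabla\psi(\mathbf{x}),\mathbf{y}-\mathbf{x}\rangle$, note $\nabla g(\mathbf{y})=\nabla\psi(\mathbf{y})-\nabla\psi(\mathbf{x})$ is $L$-Lipschitz with $\nabla g(\mathbf{x})=\mathbf{0}$, and bound $|g(\mathbf{y})|$ by a mean-value estimate along the segment; this is essentially the same computation in disguise, so I would present the integral version for cleanliness.
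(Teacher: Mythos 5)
Your proof is correct and is exactly the standard argument for this result: the paper does not prove the lemma itself but cites Nesterov's Lemma 1.2.3, whose proof is precisely the integral-remainder computation you give (parametrize the segment, subtract the linear term, apply Cauchy--Schwarz and the Lipschitz bound, and integrate $Lt$ to obtain $\tfrac{L}{2}\|\mathbf{y}-\mathbf{x}\|_2^2$). Nothing further is needed.
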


\begin{definition}[Strong convexity \cite{nesterov2018lectures}]
	A function $\psi$ is $\lambda$-strongly convex if the function
	\[
	\psi(\boldsymbol{x})-\frac{\lambda}{2}\|\boldsymbol{x}\|_2^{2}
	\]
	is convex for some $\lambda>0$.  The constant $\lambda$ is called the convexity parameter.
\end{definition}

\begin{lemma}[Theorem 2.1.10 \cite{nesterov2018lectures}]
	\label{lem:str-cvx}
	The inequalities 
	\begin{align}
		&\psi(\boldsymbol{y}) \geq \psi(\boldsymbol{x})+\langle\nabla \psi(\boldsymbol{x}), \boldsymbol{y}-\boldsymbol{x}\rangle+\frac{\lambda}{2} \|\boldsymbol{y}-\boldsymbol{x}\|_2^2, \label{eq:str-cvx1} \\
		&\lambda\|\boldsymbol{x}-\boldsymbol{y}\|_2 \leq\|\nabla \psi(\boldsymbol{x})-\nabla \psi(\boldsymbol{y})\|_2, \label{eq:str-cvx7}
	\end{align}
	hold for all $\boldsymbol{x},\boldsymbol{y}\in\mathbb{R}^d$ when $\psi$ is $\lambda$-strongly convex.
\end{lemma}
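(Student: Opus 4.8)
The plan is to reduce both inequalities to standard facts about convex differentiable functions, applied to the auxiliary function $g(\mathbf{x}) := \psi(\mathbf{x}) - \tfrac{\lambda}{2}\|\mathbf{x}\|_2^2$. By the definition of $\lambda$-strong convexity, $g$ is convex, and since $\psi$ is continuously differentiable so is $g$, with $\nabla g(\mathbf{x}) = \nabla\psi(\mathbf{x}) - \lambda\mathbf{x}$. Everything then follows from the first-order characterization of convexity and the monotonicity of the gradient of a convex function.

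To obtain \eqref{eq:str-cvx1}, I would apply the first-order inequality for the convex function $g$, namely $g(\mathbf{y}) \ge g(\mathbf{x}) + \langle\nabla g(\mathbf{x}), \mathbf{y}-\mathbf{x}\rangle$, substitute $g(\mathbf{x}) = \psi(\mathbf{x}) - \tfrac{\lambda}{2}\|\mathbf{x}\|_2^2$ and $\nabla g(\mathbf{x}) = \nabla\psi(\mathbf{x}) - \lambda\mathbf{x}$, and rearrange. The only point needing care is the bookkeeping of the quadratic terms: one has to check the identity $\tfrac{\lambda}{2}\|\mathbf{y}\|_2^2 - \tfrac{\lambda}{2}\|\mathbf{x}\|_2^2 - \lambda\langle\mathbf{x},\mathbf{y}-\mathbf{x}\rangle = \tfrac{\lambda}{2}\|\mathbf{y}-\mathbf{x}\|_2^2$, which is immediate from expanding $\|\mathbf{y}-\mathbf{x}\|_2^2 = \|\mathbf{y}\|_2^2 - 2\langle\mathbf{x},\mathbf{y}\rangle + \|\mathbf{x}\|_2^2$. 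Collecting terms gives exactly \eqref{eq:str-cvx1}.

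For \eqref{eq:str-cvx7}, I would write \eqref{eq:str-cvx1} once as stated and once with the roles of $\mathbf{x}$ and $\mathbf{y}$ exchanged, and add the two inequalities. The function values cancel, leaving $\langle\nabla\psi(\mathbf{x}) - \nabla\psi(\mathbf{y}), \mathbf{x}-\mathbf{y}\rangle \ge \lambda\|\mathbf{x}-\mathbf{y}\|_2^2$ (equivalently, the monotonicity estimate for $\nabla g$ shifted by $\lambda\mathbf{I}$). Applying the Cauchy--Schwarz inequality to the left-hand side yields $\|\nabla\psi(\mathbf{x}) - \nabla\psi(\mathbf{y})\|_2\,\|\mathbf{x}-\mathbf{y}\|_2 \ge \lambda\|\mathbf{x}-\mathbf{y}\|_2^2$; dividing by $\|\mathbf{x}-\mathbf{y}\|_2$ when $\mathbf{x}\ne\mathbf{y}$ gives \eqref{eq:str-cvx7}, and the case $\mathbf{x}=\mathbf{y}$ is trivial. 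There is no substantive obstacle in this argument; the only things to watch are the quadratic-term algebra in the first step and discarding the degenerate case $\mathbf{x}=\mathbf{y}$ before dividing in the second.
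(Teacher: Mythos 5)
Your proof is correct. The paper does not prove this lemma itself---it cites it as Theorem 2.1.10 of Nesterov's book---and your argument (reducing to the first-order convexity inequality for $\psi(\mathbf{x})-\tfrac{\lambda}{2}\|\mathbf{x}\|_2^2$ to get \eqref{eq:str-cvx1}, then symmetrizing and applying Cauchy--Schwarz to get \eqref{eq:str-cvx7}) is precisely the standard derivation found in that reference, with the quadratic-term algebra and the $\mathbf{x}=\mathbf{y}$ degenerate case both handled correctly.
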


Combining \Cref{def:Lip_con} and inequality \eqref{eq:str-cvx7} implies that
$\lambda\leq L$ whenever $\psi$ is both $L$-smooth and $\lambda$-strongly convex. The following inequality holds for such functions as well.

\begin{lemma}[Theorem 2.1.12 \cite{nesterov2018lectures}]
	\label{lem:L-smooth-str-cvx}
	Given that $\psi$ is $L$-smooth and
	$\lambda$-strongly convex, we have
	\begin{equation}
		\langle\nabla \psi(\boldsymbol{x})-\nabla \psi(\boldsymbol{y}), \boldsymbol{x}-\boldsymbol{y}\rangle 
		\geq \frac{\lambda L}{\lambda+L}\|\boldsymbol{x}-\boldsymbol{y}\|_2^2+\frac{1}{\lambda+L}\|\nabla \psi(\boldsymbol{x})-\nabla \psi(\boldsymbol{y})\|_2^2 . \label{eq:L-smooth-str-cvx}
	\end{equation}
\end{lemma}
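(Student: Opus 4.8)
The plan is to reduce \eqref{eq:L-smooth-str-cvx} to the purely convex case by peeling off the quadratic that realizes the strong convexity, and then to invoke a co-coercivity estimate for convex smooth functions. Set $\phi(\mathbf{x}) := \psi(\mathbf{x}) - \tfrac{\lambda}{2}\|\mathbf{x}\|_2^{2}$. By the definition of $\lambda$-strong convexity $\phi$ is convex, and one checks that $\phi$ is $(L-\lambda)$-smooth; the degenerate case $\lambda = L$ makes $\phi$ affine, so that $\psi$ is a quadratic and \eqref{eq:L-smooth-str-cvx} holds with equality, hence I may assume $\lambda < L$ below. Since $\nabla\phi(\mathbf{x}) = \nabla\psi(\mathbf{x}) - \lambda\mathbf{x}$, it suffices to establish the co-coercivity inequality
\[
\bigl\langle \nabla\phi(\mathbf{x}) - \nabla\phi(\mathbf{y}),\ \mathbf{x} - \mathbf{y}\bigr\rangle \ \ge\ \tfrac{1}{L-\lambda}\bigl\|\nabla\phi(\mathbf{x}) - \nabla\phi(\mathbf{y})\bigr\|_2^{2}.
\]
Indeed, writing $\mathbf{g} := \nabla\psi(\mathbf{x}) - \nabla\psi(\mathbf{y})$ and $\mathbf{d} := \mathbf{x} - \mathbf{y}$, this reads $\langle \mathbf{g} - \lambda\mathbf{d},\,\mathbf{d}\rangle \ge \tfrac{1}{L-\lambda}\|\mathbf{g} - \lambda\mathbf{d}\|_2^{2}$; multiplying by $L - \lambda > 0$, expanding both sides, cancelling the $\lambda^{2}\|\mathbf{d}\|_2^{2}$ terms, gathering the inner-product terms into $(L+\lambda)\langle\mathbf{g},\mathbf{d}\rangle$ and the rest into $\|\mathbf{g}\|_2^{2} + \lambda L\|\mathbf{d}\|_2^{2}$, and finally dividing by $L + \lambda$ reproduces \eqref{eq:L-smooth-str-cvx} exactly.

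For the co-coercivity step I would argue directly from Lemma \ref{lem:L-smooth1}. Fix $\mathbf{x}$ and consider $\eta(\mathbf{z}) := \phi(\mathbf{z}) - \langle\nabla\phi(\mathbf{x}),\mathbf{z}\rangle$, which is convex, $(L-\lambda)$-smooth, and satisfies $\nabla\eta(\mathbf{x}) = 0$, hence is globally minimized at $\mathbf{z} = \mathbf{x}$. Applying the upper bound in \eqref{eq:L-smooth1} to $\eta$ at the point $\mathbf{y} - \tfrac{1}{L-\lambda}\bigl(\nabla\phi(\mathbf{y}) - \nabla\phi(\mathbf{x})\bigr)$, and noting that $\eta(\mathbf{x})$ does not exceed the value of $\eta$ there, gives after rearranging
\[
\phi(\mathbf{y}) \ \ge\ \phi(\mathbf{x}) + \bigl\langle\nabla\phi(\mathbf{x}),\,\mathbf{y} - \mathbf{x}\bigr\rangle + \tfrac{1}{2(L-\lambda)}\bigl\|\nabla\phi(\mathbf{y}) - \nabla\phi(\mathbf{x})\bigr\|_2^{2}.
\]
Writing the same inequality with $\mathbf{x}$ and $\mathbf{y}$ interchanged and adding the two cancels the function values and yields the desired co-coercivity estimate. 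The $(L-\lambda)$-smoothness of $\phi$ used along the way is, in the twice differentiable setting, immediate from $\lambda I \preceq \nabla^{2}\psi(\mathbf{x}) \preceq L I$, and in general is part of the standard characterization of smooth strongly convex functions in \cite{nesterov2018lectures}.

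I expect the only genuinely delicate point to be precisely that $(L-\lambda)$-smoothness of $\phi$ without a twice-differentiability hypothesis: the naive manipulations one is tempted to try, combining $\|\nabla\psi(\mathbf{x})-\nabla\psi(\mathbf{y})\|_2 \le L\|\mathbf{x}-\mathbf{y}\|_2$ with \eqref{eq:str-cvx7} and the Cauchy--Schwarz inequality, succeed for $\lambda \le L/2$ but become awkward for $\lambda \in (L/2,\,L)$, so it is cleanest either to quote this fact from \cite{nesterov2018lectures} or to restrict to $C^{2}$ objectives, which costs nothing for the applications considered here. Granting that, the remaining work---the co-coercivity lemma, the identification $\nabla\phi(\mathbf{x}) = \nabla\psi(\mathbf{x}) - \lambda\mathbf{x}$, and the collection of terms---is entirely routine.
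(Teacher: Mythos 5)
Your proof is correct and is essentially the argument Nesterov gives for Theorem 2.1.12, which the paper simply quotes without proof: peel off the strong-convexity quadratic so that $\phi=\psi-\tfrac{\lambda}{2}\|\cdot\|_2^2$ is convex and $(L-\lambda)$-smooth, apply the co-coercivity inequality to $\phi$, handle $\lambda=L$ separately, and expand. The algebra and the auxiliary-function derivation of co-coercivity both check out, and your caveat about establishing $(L-\lambda)$-smoothness of $\phi$ without twice differentiability is appropriately resolved by quoting the standard characterization from \cite{nesterov2018lectures}.
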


\subsection{Applications of Problem \eqref{eq:opt1}}
\label{sec:2.3}
This subsection starts with three classic supervised learning problems.  
For each one, the objective function $\psi(\boldsymbol{x})$ is strongly convex, with  $\nabla f_i(\boldsymbol{x})$, $i=1,\ldots,n$, and $\nabla h(\boldsymbol{x})$ being Lipschitz continuous. We then show that \eqref{eq:opt1} remains important in a wider context through a semi-supervised learning problem.

Let $\{(\boldsymbol{a}_i,b_i)\}_{i=1}^n$ be $n$ sample pairs with $\boldsymbol{a}_i\in\mathbb{R}^d$ and $b_i\in\mathbb{R}$.  
Specifying particular forms for $f_i(\boldsymbol{x})$ and $h(\boldsymbol{x})$ in \eqref{eq:opt1} yields several classic supervised learning problems \cite{scholkopf2002learning}.  

\textbf{Ridge regression.}  
For regression problems, ridge regression alleviates multicollinearity in the feature matrix.  
In this setting, 

\[
f_i(\boldsymbol{x})=\frac{1}{2}(\boldsymbol{a}_i^{\top} \boldsymbol{x} - b_i)^2, i=1,\ldots,n,\; h(\boldsymbol{x})=\frac{L_h}{2}\|\boldsymbol{x}\|_2^2,
\]  
where $L_h$ is a hyperparameter that balances the two terms.  
This model is effective when the matrix formed by $\{\boldsymbol{a}_i\}_{i=1}^{n}$ has a large condition number, such as in adaptive beamforming \cite{selen2008automatic}.

\textbf{Logistic regression.}  
For binary classification, we may use logistic regression.  
Here $b_i\in\{1,-1\}$ and 

\[
f_i(\boldsymbol{x})=\log\bigl(1+\exp(-b_i \boldsymbol{a}_i^{\top} \boldsymbol{x})\bigr), i=1,\ldots,n, 
h(\boldsymbol{x})=\frac{L_h}{2}\|\boldsymbol{x}\|_2^2.
\]  
This model is applied in click-through-rate estimation~\cite{mcmahan2013ad} and other scenarios that require a clear probabilistic interpretation.  

\textbf{Support vector machine.}  
When a classification task requires a large margin or robustness to outliers, the support vector machine (SVM) is a common choice.  
For SVM with the squared hinge loss, the problem \eqref{eq:opt1} takes the form

\[
f_i(\boldsymbol{x})=\frac{1}{2}\max\{0,1-b_i \boldsymbol{a}_i^{\top} \boldsymbol{x}\}^2, i=1,\ldots,n,
h(\boldsymbol{x})=\frac{L_h}{2}\|\boldsymbol{x}\|_2^2.
\]  
Such a setting suits applications that need clear decision boundaries, such as spam filtering \cite{drucker1999support}.  

In the above three problems, $\nabla f_i(\boldsymbol{x})$, $i=1,\ldots,n$, and $\nabla h(\boldsymbol{x})$ are Lipschitz continuous.  
The Lipschitz constants for $\nabla f_i(\boldsymbol{x})$ in each problem are $\|\boldsymbol{a}_i\|_2^2$, $\|\boldsymbol{a}_i\|_2^2/4$, and $\|\boldsymbol{a}_i\|_2^2$, respectively, while the Lipschitz constant for $\nabla h(\boldsymbol{x})$ is $L_h$.  
The objective function $\psi(\boldsymbol{x})$ is strongly convex with convexity parameter $\lambda=L_h$. In practice, these models often deal with datasets containing millions of samples or more \cite{fan2008liblinear}.  
Computing the full gradient $\nabla F(\boldsymbol{x})=\tfrac{1}{n}\sum_{i=1}^{n}\nabla f_i(\boldsymbol{x})$ is then expensive.  
A natural remedy is to estimate $\nabla F(\boldsymbol{x})$ stochastically, whereas $\nabla h(\boldsymbol{x})$ can still be obtained deterministically since its computation is independent of the data size.  
This is precisely the setting that we analyze in depth. 

\textbf{Semi-supervised learning problem.} Recent strides in machine learning lead to more complicated $F(\boldsymbol{x})$ and $h(\boldsymbol{x})$. The semi-supervised learning problem is an example, which uses limited labeled data and plentiful unlabeled data \cite{zhu2009introduction}. For instance, satellite image analysis deals with abundant images but acquiring labels (e.g., poverty levels) is expensive \cite{jean2018semi}. Semi-supervised methods train models on a small labeled set together with a large unlabeled set, and generally outperform those using only labeled data \cite{zhu2009introduction}. In some cases, the objective functions take the form $F(\boldsymbol{x})+h(\boldsymbol{x})$. When $F(\boldsymbol{x})$ uses a large unlabeled set with the finite-sum structure and $h(\boldsymbol{x})$ uses limited labeled data or lacks such a structure, solving the problem \eqref{eq:opt1} with the mini-batch SGD fits the scenario in this study. Although the objective function may not be globally strongly convex, it may be strongly convex near local minimizers \cite{bottou2018optimization}.

A concrete example is the semi-supervised deep kernel learning (SSDKL) proposed by Jean et al. \cite{jean2018semi}. The model first maps the input through a neural network to a feature space and then introduces a parametric Gaussian process kernel in that space. All trainable parameters form the vector $\boldsymbol{x}$ in the problem \eqref{eq:opt1}. Let the unlabeled set be $\mathcal{J}_U=\{\boldsymbol{a}_i\}_{i=1}^{n}$ and the labeled set be $\mathcal{J}_L=\{(\boldsymbol{a}_j,b_j)\}_{j=n+1}^{n+m}$ with $m\ll n$. The objective function of SSDKL is

\begin{equation}\label{eq:SSDKL_obj}
	\psi(\boldsymbol{x})= \frac{\alpha}{n} \sum_{\boldsymbol{a}_i \in \mathcal{J}_U} v(\boldsymbol{a}_i,\mathcal{J}_L;\boldsymbol{x}) + \frac{1}{m} \ell(\mathcal{J}_L;\boldsymbol{x}),
\end{equation}
where $v(\boldsymbol{a}_i,\mathcal{J}_L;\boldsymbol{x})$ is the predictive variance at $\boldsymbol{a}_i$ and $\ell(\mathcal{J}_L;\boldsymbol{x})$ is the negative log-likelihood on the labeled set. The constant $\alpha$ balances the two terms. Setting $f_i(\boldsymbol{x})=\alpha v(\boldsymbol{a}_i,\mathcal{J}_L;\boldsymbol{x})$ for $i=1,\dots,n$ and $h(\boldsymbol{x})=\tfrac{1}{m} \ell(\mathcal{J}_L;\boldsymbol{x})$ rewrites \eqref{eq:SSDKL_obj} into the form of the problem \eqref{eq:opt1}. The part $F(\boldsymbol{x})=\frac{\alpha}{n}\sum_{\boldsymbol{a}_i\in \mathcal{J}_U} v(\boldsymbol{a}_i,\mathcal{J}_L;\boldsymbol{x})$
has the finite-sum structure and involves the large unlabeled set, so its gradient can be estimated by stochastic approximation. The term $h(\boldsymbol{x})=\frac{1}{m} \ell(\mathcal{J}_L;\boldsymbol{x})$
lacks a finite-sum form, but computing its gradient directly is relatively cheap due to the small size of $\mathcal{J}_L$. Hence, applying the mini-batch SGD to \eqref{eq:SSDKL_obj} fits the scenario we consider, and it is indeed the optimizer chosen in \cite{jean2018semi}.

\section{Convergence Analysis}\label{sec:3}

In this section, we present and discuss the main theorem of this paper. For convenience, we refer to $F(\boldsymbol{x})$ as the SG-term (indicating that its gradient is obtained via stochastic approximation) and to $h(\boldsymbol{x})$ as the DG-term (reflecting its deterministic gradient calculation).

An assumption for the objective function $\psi(\boldsymbol{x})=F(\boldsymbol{x})+h(\boldsymbol{x})$ in \eqref{eq:opt1}, with $F(\boldsymbol{x}):=\tfrac{1}{n}\sum_{i=1}^{n} f_i(\boldsymbol{x})$, is provided as follows.

\noindent \textbf{Assumption} Each $f_i(\boldsymbol{x})$ is $L_i$-smooth for $i=1,\ldots,n$, the function $h(\boldsymbol{x})$ is $L_h$-smooth, and $\psi(\boldsymbol{x})$ is $\lambda$-strongly convex.

A similar assumption can be found in~\cite{nguyen2018sgd}.

Let $\mathcal{S}_k=\{\xi_{k,1},\ldots,\xi_{k,B}\}$ be a set of $B$ independent and identically distributed random variables. Each $\xi_{k,i}$ takes values in $\{1,\ldots,n\}$ and satisfies $\mathbb{P}(\xi_{k,i}=j)=p_j>0$ for $i=1,\ldots,B$ and $j=1,\ldots,n$. The formal framework of the mini-batch SGD is presented below.

\begin{algorithm}[H]
	\caption*{\textbf{Algorithm} Mini-batch SGD}
	\begin{algorithmic}[1]
		\REQUIRE Step size $\eta$, initial point $\boldsymbol{x}_1$.
		\FOR{$k=1,\ldots,K$}
		\STATE Compute the mini-batch stochastic gradient $\boldsymbol{g}_k=\frac{1}{B}\sum_{j=1}^{B} \frac{1}{\textstyle np_{\scriptscriptstyle \xi_{k,j}}}\nabla f_{\xi_{k,j}}(\boldsymbol{x}_k)$.
		\STATE $\boldsymbol{x}_{k+1}=\boldsymbol{x}_k-\eta(\boldsymbol{g}_k+\nabla h(\boldsymbol{x}_k))$.
		\ENDFOR
		\ENSURE $\boldsymbol{x}_{K+1}$.
	\end{algorithmic}
\end{algorithm}

Since $\xi_{k,1},\ldots,\xi_{k,B}$ are independent and identically distributed, the mini-batch gradient is an unbiased estimator of the full gradient:
\begin{small}
	\begin{equation*}
		\mathbb{E}\left[\frac{1}{B}\sum\limits_{j=1}^B \frac{1}{n\textstyle p_{\scriptscriptstyle \xi_{k,j}}}\nabla f_{\xi_{k,j}}(\boldsymbol{x}_k)\middle| \mathcal{F}_k\right]=\frac{1}{B}\sum\limits_{j=1}^B \mathbb{E}\left[\frac{1}{n\textstyle p_{\scriptscriptstyle \xi_{k,j}}}\nabla f_{\xi_{k,j}}(\boldsymbol{x}_k)\middle| \mathcal{F}_k\right] =\frac{1}{n}\sum_{i=1}^n\nabla f_i(\boldsymbol{x}_k),
	\end{equation*}
\end{small}
where $\mathcal{F}_k$ is the $\sigma$-field generated by $\boldsymbol{x}_1,\ldots,\boldsymbol{x}_k$, $k=1,\ldots,K$.

The following is our main theorem, which shows that the Lipschitz constants $L_i$ for the gradients of $f_i(\boldsymbol{x})$, $i=1\ldots,n$, in the SG-term $F(\boldsymbol{x})$ and the Lipschitz constant $L_h$ for the gradient of the DG-term $h(\boldsymbol{x})$ affect the algorithm in distinct ways when applying the mini-batch SGD to \eqref{eq:opt1}.

\begin{theorem}\label{thm:1}
	Let $\boldsymbol{x}^*$ be the minimizer of the  problem \eqref{eq:opt1} and $\{\boldsymbol{x}_k\}_{k=1}^{K+1}$ be the sequence generated by the mini-batch SGD with the step size $\eta$ satisfying
	\begin{equation}\label{eq:eta4}
		0<\eta \leq \frac{2}{\bigl(s_{\scriptscriptstyle F} +1\bigr)\Big(\frac{1}{n}\sum\limits_{i=1}^{n} L_i + L_h+\lambda\Big)}.	
	\end{equation}
	Then, the following inequality
	\begin{equation}\label{eq:linear_convergence4}
		\mathbb{E}\bigl[\|\boldsymbol{x}_{K+1}-\boldsymbol{x}^*\|_2^2\bigr] \leq (1-q)^K\|\boldsymbol{x}_1-\boldsymbol{x}^*\|_2^2+R,	
	\end{equation}
	holds, where
	\begin{equation}\label{eq:q4}
		q:= \frac{2\eta\lambda\Big(\tfrac{1}{n}\sum\limits_{i=1}^{n} L_i+L_h\Big)}{\frac{1}{n}\sum\limits_{i=1}^{n} L_i + L_h+\lambda} \in(0,1),\quad
		R:= \frac{2\eta^2\sigma_{\scriptscriptstyle F}}{q},	
	\end{equation}
	\begin{equation}\label{eq:s_F}
		s_{\scriptscriptstyle F}:=\frac{2}{B\lambda^2 n}\sum_{i=1}^{n} \frac{L_i^2}{n p_i},
	\end{equation}
	\begin{equation}\label{eq:sigma_F}
		\sigma_{\scriptscriptstyle F}:=\frac{1}{B}\Bigg(\frac{1}{ n}\sum_{i=1}^{n}\frac{1}{n p_i}\|\nabla f_i\bigl(\boldsymbol{x}^*\bigr)\|_2^2-\|\nabla F\bigl(\boldsymbol{x}^*\bigr)\|_2^2\Bigg).
	\end{equation}
\end{theorem}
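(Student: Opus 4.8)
The plan is to track the squared distance $\|\mathbf{x}_{k+1}-\mathbf{x}^*\|_2^2$ and show it satisfies a one-step contraction of the form $\mathbb{E}[\|\mathbf{x}_{k+1}-\mathbf{x}^*\|_2^2\mid\mathcal{F}_k]\le(1-q)\|\mathbf{x}_k-\mathbf{x}^*\|_2^2+2\eta^2\sigma_{\scriptscriptstyle F}$, from which \eqref{eq:linear_convergence4} follows by unrolling the recursion and summing the geometric series $\sum_{j\ge 0}(1-q)^j 2\eta^2\sigma_{\scriptscriptstyle F}=2\eta^2\sigma_{\scriptscriptstyle F}/q=R$. First I would expand, using the update $\mathbf{x}_{k+1}=\mathbf{x}_k-\eta(\mathbf{g}_k+\nabla h(\mathbf{x}_k))$ and writing $\mathbf{d}_k:=\mathbf{g}_k+\nabla h(\mathbf{x}_k)$,
\[
\|\mathbf{x}_{k+1}-\mathbf{x}^*\|_2^2=\|\mathbf{x}_k-\mathbf{x}^*\|_2^2-2\eta\langle\mathbf{d}_k,\mathbf{x}_k-\mathbf{x}^*\rangle+\eta^2\|\mathbf{d}_k\|_2^2,
\]
and take conditional expectation given $\mathcal{F}_k$. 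Since the mini-batch gradient is unbiased, $\mathbb{E}[\mathbf{d}_k\mid\mathcal{F}_k]=\nabla\psi(\mathbf{x}_k)$, so the cross term becomes $-2\eta\langle\nabla\psi(\mathbf{x}_k),\mathbf{x}_k-\mathbf{x}^*\rangle=-2\eta\langle\nabla\psi(\mathbf{x}_k)-\nabla\psi(\mathbf{x}^*),\mathbf{x}_k-\mathbf{x}^*\rangle$ because $\nabla\psi(\mathbf{x}^*)=0$. Here I invoke Lemma \ref{lem:L-smooth-str-cvx} with the observation that $\psi$ is $\lambda$-strongly convex and $(\tfrac1n\sum_i L_i+L_h)$-smooth, giving
\[
\langle\nabla\psi(\mathbf{x}_k)-\nabla\psi(\mathbf{x}^*),\mathbf{x}_k-\mathbf{x}^*\rangle\ge\frac{\lambda\bar L}{\lambda+\bar L}\|\mathbf{x}_k-\mathbf{x}^*\|_2^2+\frac{1}{\lambda+\bar L}\|\nabla\psi(\mathbf{x}_k)\|_2^2,
\]
where $\bar L:=\tfrac1n\sum_i L_i+L_h$; the coefficient $\tfrac{2\eta\lambda\bar L}{\lambda+\bar L}$ is exactly $q$.

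The main work is bounding $\mathbb{E}[\|\mathbf{d}_k\|_2^2\mid\mathcal{F}_k]$. I would split $\mathbf{d}_k=(\mathbf{g}_k-\nabla F(\mathbf{x}_k))+\nabla\psi(\mathbf{x}_k)$ and use $\|\mathbf{d}_k\|_2^2=\|\mathbf{g}_k-\nabla F(\mathbf{x}_k)\|_2^2+2\langle\mathbf{g}_k-\nabla F(\mathbf{x}_k),\nabla\psi(\mathbf{x}_k)\rangle+\|\nabla\psi(\mathbf{x}_k)\|_2^2$; the conditional expectation of the cross term vanishes by unbiasedness, so $\mathbb{E}[\|\mathbf{d}_k\|_2^2\mid\mathcal{F}_k]=\mathbb{E}[\|\mathbf{g}_k-\nabla F(\mathbf{x}_k)\|_2^2\mid\mathcal{F}_k]+\|\nabla\psi(\mathbf{x}_k)\|_2^2$. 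For the variance term, since the $B$ samples are i.i.d. the variance of the average is $1/B$ times the single-sample variance: $\mathbb{E}[\|\mathbf{g}_k-\nabla F(\mathbf{x}_k)\|_2^2\mid\mathcal{F}_k]=\tfrac1B\bigl(\sum_{i=1}^n p_i\|\tfrac{1}{np_i}\nabla f_i(\mathbf{x}_k)\|_2^2-\|\nabla F(\mathbf{x}_k)\|_2^2\bigr)\le\tfrac{1}{Bn}\sum_{i=1}^n\tfrac{1}{np_i}\|\nabla f_i(\mathbf{x}_k)\|_2^2$. Then I would use $\|\nabla f_i(\mathbf{x}_k)\|_2^2\le 2\|\nabla f_i(\mathbf{x}_k)-\nabla f_i(\mathbf{x}^*)\|_2^2+2\|\nabla f_i(\mathbf{x}^*)\|_2^2\le 2L_i^2\|\mathbf{x}_k-\mathbf{x}^*\|_2^2+2\|\nabla f_i(\mathbf{x}^*)\|_2^2$ by $L_i$-smoothness, which turns the variance bound into $s_{\scriptscriptstyle F}\tfrac{\lambda^2}{?}$... concretely into $\tfrac{2}{Bn}\sum_i\tfrac{L_i^2}{np_i}\|\mathbf{x}_k-\mathbf{x}^*\|_2^2+2\sigma_{\scriptscriptstyle F}$; matching the definitions \eqref{eq:s_F}, \eqref{eq:sigma_F} the first piece equals $s_{\scriptscriptstyle F}\lambda^2\|\mathbf{x}_k-\mathbf{x}^*\|_2^2$, and by \eqref{eq:str-cvx7} applied to $\psi$ at $\mathbf{x}_k,\mathbf{x}^*$ we have $\lambda^2\|\mathbf{x}_k-\mathbf{x}^*\|_2^2\le\|\nabla\psi(\mathbf{x}_k)\|_2^2$, so the variance is bounded by $s_{\scriptscriptstyle F}\|\nabla\psi(\mathbf{x}_k)\|_2^2+2\sigma_{\scriptscriptstyle F}$.

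Collecting everything gives
\[
\mathbb{E}[\|\mathbf{x}_{k+1}-\mathbf{x}^*\|_2^2\mid\mathcal{F}_k]\le(1-q)\|\mathbf{x}_k-\mathbf{x}^*\|_2^2+\eta\Bigl(\eta(s_{\scriptscriptstyle F}+1)-\tfrac{2}{\lambda+\bar L}\Bigr)\|\nabla\psi(\mathbf{x}_k)\|_2^2+2\eta^2\sigma_{\scriptscriptstyle F},
\]
and the step-size condition \eqref{eq:eta4}, namely $\eta\le\tfrac{2}{(s_{\scriptscriptstyle F}+1)(\lambda+\bar L)}$, makes the $\|\nabla\psi(\mathbf{x}_k)\|_2^2$ coefficient nonpositive, so it can be dropped. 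Taking total expectation and iterating yields the claim. I expect the main obstacle to be the bookkeeping that ties the raw constants ($\tfrac{1}{Bn}\sum_i\tfrac{L_i^2}{np_i}$ and the gradient-at-minimizer term) to the stated $s_{\scriptscriptstyle F}$ and $\sigma_{\scriptscriptstyle F}$ while keeping the $\lambda^2$ factor in the right place via the strong-convexity inequality \eqref{eq:str-cvx7}, together with checking that $q\in(0,1)$ under \eqref{eq:eta4} (which follows since $\eta\le\tfrac{2}{(s_{\scriptscriptstyle F}+1)(\lambda+\bar L)}\le\tfrac{2}{\lambda+\bar L}$ forces $q=\tfrac{2\eta\lambda\bar L}{\lambda+\bar L}\le\tfrac{4\lambda\bar L}{(\lambda+\bar L)^2}\le 1$, with positivity clear).
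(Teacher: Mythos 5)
Your proposal is correct and follows essentially the same route as the paper's proof: the same expansion of $\|\mathbf{x}_{k+1}-\mathbf{x}^*\|_2^2$, the same use of Lemma~\ref{lem:L-smooth-str-cvx} on the cross term, the same bias--variance split of the second moment with the i.i.d.\ $1/B$ reduction, the same $L_i$-smoothness plus \eqref{eq:str-cvx7} step to reach $s_{\scriptscriptstyle F}\|\nabla\psi(\mathbf{x}_k)\|_2^2+2\sigma_{\scriptscriptstyle F}$, and the same recursion unrolling. Your verification that $q\in(0,1)$ via $\eta<2/(\lambda+\bar L)$ and $4\lambda\bar L\le(\lambda+\bar L)^2$ is in fact cleaner than the paper's algebraic manipulation (note only that strictness $q<1$ requires $s_{\scriptscriptstyle F}>0$, which holds).
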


\begin{proof}
	Let us start at $k=K$. From the iterative scheme of mini-batch SGD, we have	
	\begin{align}
		& \mathbb{E}\left[\left\|\boldsymbol{x}_{k+1}-\boldsymbol{x}^*\right\|_2^2 \middle| \mathcal{F}_k \right] \nonumber\\
		=&\mathbb{E}\Bigg[\Bigg\|\boldsymbol{x}_k-\boldsymbol{x}^*-\eta \Bigg(\frac{1}{B}\Bigg(\sum_{j=1}^{B}\frac{1}{n\textstyle p_{\scriptscriptstyle \xi_{k,j}}}\nabla f_{\xi_{k,j}}(\boldsymbol{x}_k)\Bigg)+\nabla h(\boldsymbol{x}_k)\Bigg)\Bigg\|_{2}^{2} \Bigg| \mathcal{F}_k\Bigg] \nonumber\\
		=&\left\|\boldsymbol{x}_k-\boldsymbol{x}^*\right\|_2^2 -2 \eta \mathbb{E}\Bigg[\Bigg\langle \frac{1}{B}\Bigg(\sum_{j=1}^{B} \frac{1}{n\textstyle p_{\scriptscriptstyle \xi_{k,j}}} \nabla f_{\xi_{k,j}}(\boldsymbol{x}_k)\Bigg)+\nabla h(\boldsymbol{x}_k), \boldsymbol{x}_k-\boldsymbol{x}^*\Bigg\rangle \Bigg| \mathcal{F}_k \Bigg]\nonumber \\
		&+\eta^2 \mathbb{E}\Bigg[\Bigg\|\frac{1}{B}\Bigg(\sum_{j=1}^{B}\frac{1}{n\textstyle p_{\scriptscriptstyle \xi_{k,j}}}\nabla f_{\xi_{k,j}}(\boldsymbol{x}_k)\Bigg)+\nabla h(\boldsymbol{x}_k) \Bigg\|_{2}^{2} \Bigg| \mathcal{F}_k \Bigg] \nonumber\\
		=&\left\|\boldsymbol{x}_k-\boldsymbol{x}^*\right\|_2^2-2 \eta\left\langle\nabla F(\boldsymbol{x}_k)+\nabla h(\boldsymbol{x}_k), \boldsymbol{x}_k-\boldsymbol{x}^*\right\rangle \nonumber \\
		&+\eta^2 \mathbb{E}\Bigg[\Bigg\|\frac{1}{B}\Bigg(\sum_{j=1}^{B}\frac{1}{n\textstyle p_{\scriptscriptstyle \xi_{k,j}}}\nabla f_{\xi_{k,j}}(\boldsymbol{x}_k)\Bigg)+\nabla h(\boldsymbol{x}_k) \Bigg\|_{2}^{2} \Bigg| \mathcal{F}_k\Bigg].\label{eq:total_ineq2_mini}
	\end{align}
	
	Since $\psi(\boldsymbol{x})=F(\boldsymbol{x})+h(\boldsymbol{x})=\frac{1}{n}\sum_{i=1}^{n}f_i(\boldsymbol{x})+h(\boldsymbol{x})$ is $(\tfrac{1}{n}\sum_{i=1}^{n} L_i+L_h)$-smooth and $\lambda$-strongly convex, from \eqref{eq:L-smooth-str-cvx}, we have
	\begin{align}
		&\left\langle\nabla F(\boldsymbol{x}_k)+\nabla h(\boldsymbol{x}_k), \boldsymbol{x}_k-\boldsymbol{x}^*\right\rangle \nonumber \\
		\geq &\frac{\lambda\Big(\frac{1}{n}\sum\limits_{i=1}^{n}L_i+L_h\Big)}{\frac{1}{n}\sum\limits_{i=1}^{n}L_i+L_h+\lambda}\|\boldsymbol{x}_k-\boldsymbol{x}^*\|_2^2
		+\frac{1}{\frac{1}{n}\sum\limits_{i=1}^{n}L_i+L_h+\lambda}\|\nabla F(\boldsymbol{x}_k)+\nabla h(\boldsymbol{x}_k)\|_2^2.\label{eq:Lsmooth_strcvx}
	\end{align}

	Combining \eqref{eq:total_ineq2_mini} with \eqref{eq:Lsmooth_strcvx} yields
	
	\begin{align}
		\mathbb{E}\left[\left\|\boldsymbol{x}_{k+1}-\boldsymbol{x}^*\right\|_2^2 \Big| \mathcal{F}_k\right] 
		\leq& \Bigg(1-\frac{2\eta\lambda(\frac{1}{n}\sum\limits_{i=1}^{n}L_i+L_h)}{\frac{1}{n}\sum\limits_{i=1}^{n}L_i+L_h+\lambda}\Bigg)\left\|\boldsymbol{x}_k-\boldsymbol{x}^*\right\|_2^2 \nonumber \\
		&-\frac{2\eta}{\frac{1}{n}\sum\limits_{i=1}^{n}L_i+L_h+\lambda}\|\nabla F(\boldsymbol{x}_k)+\nabla h(\boldsymbol{x}_k)\|_2^2\nonumber \\
		&+\eta^2 \mathbb{E}\Bigg[\Bigg\|\frac{1}{B}\Bigg(\sum_{j=1}^{B}\frac{1}{n\textstyle p_{\scriptscriptstyle \xi_{k,j}}}\nabla f_{\xi_{k,j}}(\boldsymbol{x}_k)\Bigg)+\nabla h(\boldsymbol{x}_k) \Bigg\|_2^2 \Bigg| \mathcal{F}_k\Bigg].\label{eq:part1_mini}
	\end{align}
	
	For the last term in \eqref{eq:part1_mini}, notice that $\mathbb{E}\Big[\frac{1}{B}\Big(\sum_{j=1}^{B}\frac{1}{\textstyle n p_{\scriptscriptstyle \xi_{k,j}}}$ $\nabla f_{\xi_{k,j}}(\boldsymbol{x}_k)\Big) \Big| \mathcal{F}_k\Big]=\nabla F(\boldsymbol{x}_k)$. As a result, we have
	\begin{align}
		& \mathbb{E}\Bigg[\Bigg\|\frac{1}{B}\Bigg(\sum_{j=1}^{B}\frac{1}{n\textstyle p_{\scriptscriptstyle \xi_{k,j}}}\nabla f_{\xi_{k,j}}(\boldsymbol{x}_k)\Bigg)+\nabla h\left(\boldsymbol{x}_k\right)\Bigg\|_2^2 \Bigg| \mathcal{F}_k\Bigg] \nonumber\\
		= & \mathbb{E}\Bigg[\Bigg\|\frac{1}{B}\left(\sum_{j=1}^{B}\frac{1}{n\textstyle p_{\scriptscriptstyle \xi_{k,j}}}\nabla f_{\xi_{k,j}}(\boldsymbol{x}_k)\right)-\nabla F\left(\boldsymbol{x}_k\right)  +\nabla F\left(\boldsymbol{x}_k\right)+\nabla h\left(\boldsymbol{x}_k\right) \Bigg\|_2^2 \Bigg| \mathcal{F}_k\Bigg]\nonumber \\
		= & \mathbb{E}\Bigg[\Bigg\|\frac{1}{B}\Bigg(\sum_{j=1}^{B}\frac{1}{n\textstyle p_{\scriptscriptstyle \xi_{k,j}}}\nabla f_{\xi_{k,j}}(\boldsymbol{x}_k)\Bigg)-\nabla F(\boldsymbol{x}_k)\Bigg\|_2^2 \Bigg| \mathcal{F}_k\Bigg] + \left\|\nabla F\left(\boldsymbol{x}_k\right)+\nabla h\left(\boldsymbol{x}_k\right)\right\|_2^2\nonumber\\
		& + 2\mathbb{E}\Bigg[\Bigg\langle \frac{1}{B}\Bigg(\sum_{j=1}^{B}\frac{1}{n\textstyle p_{\scriptscriptstyle \xi_{k,j}}}\nabla f_{\xi_{k,j}}(\boldsymbol{x}_k)\Bigg)-\nabla F\left(\boldsymbol{x}_k\right), 
		\nabla F\left(\boldsymbol{x}_k\right)+\nabla h\left(\boldsymbol{x}_k\right) \Bigg\rangle \Bigg| \mathcal{F}_k\Bigg] \nonumber \\
		= & \mathbb{E}\Bigg[\Bigg\|\frac{1}{B}\Bigg(\sum_{j=1}^{B}\frac{1}{n\textstyle p_{\scriptscriptstyle \xi_{k,j}}}\nabla f_{\xi_{k,j}}(\boldsymbol{x}_k)\Bigg)-\nabla F\left(\boldsymbol{x}_k\right)\Bigg\|_2^2\Bigg|\mathcal{F}_k\Bigg]\nonumber \\
		&+\left\|\nabla F\left(\boldsymbol{x}_k\right)+\nabla h\left(\boldsymbol{x}_k\right)\right\|_2^2.
		\label{eq:part2_mini}
	\end{align}
	
	For the first term in \eqref{eq:part2_mini}, notice that $\mathbb{E}\Big[\frac{1}{\textstyle n p_{\scriptscriptstyle \xi_{k,j}}}\nabla f_{\xi_{k,j}}\left(\boldsymbol{x}_k\right)$ $ \Big| \mathcal{F}_k\Big]=\nabla F(\boldsymbol{x}_k)$ and the random variables $\xi_{k,j},j=1,\ldots,B$, are independent and identically distributed. So we have
	\begin{align}
		&\mathbb{E}\Bigg[\Bigg\|\frac{1}{B}\Bigg(\sum\limits_{j=1}^{B}\frac{1}{\textstyle n p_{\scriptscriptstyle \xi_{k,j}}}\nabla f_{\xi_{k,j}}(\boldsymbol{x}_k)\Bigg)-\nabla F\left(\boldsymbol{x}_k\right)\Bigg\|_2^2 \Bigg| \mathcal{F}_k\Bigg] \nonumber \\
		=& \frac{1}{B^2}\mathbb{E}\Bigg[\Bigg\|\sum\limits_{j=1}^{B}\Bigg(\frac{1}{\textstyle n p_{\scriptscriptstyle \xi_{k,j}}}\nabla f_{\xi_{k,j}}(\boldsymbol{x}_k)-\nabla F\left(\boldsymbol{x}_k\right)\Bigg)\Bigg\|_2^2 \Bigg| \mathcal{F}_k\Bigg] \nonumber\\
		=& \frac{1}{B^2}\mathbb{E}\Bigg[\sum\limits_{j=1}^{B}\Bigg\|\frac{1}{\textstyle n p_{\scriptscriptstyle \xi_{k,j}}}\nabla f_{\xi_{k,j}}(\boldsymbol{x}_k)-\nabla F\left(\boldsymbol{x}_k\right)\Bigg\|_2^2 \nonumber \\
		&+\sum\limits_{j\neq l}\Bigg\langle\frac{1}{\textstyle n p_{\scriptscriptstyle \xi_{k,j}}}\nabla f_{\xi_{k,j}}(\boldsymbol{x}_k)-\nabla F(\boldsymbol{x}_k), \frac{1}{\textstyle n p_{\scriptscriptstyle \xi_{k,l}}}\nabla f_{\xi_{k,l}}(\boldsymbol{x}_k)-\nabla F(\boldsymbol{x}_k) \Bigg\rangle\Bigg| \mathcal{F}_k\Bigg]\nonumber \\
		=&\frac{1}{B^2}\mathbb{E}\Bigg[\sum_{j=1}^{B}\Bigg\|\frac{1}{\textstyle n p_{\scriptscriptstyle \xi_{k,j}}}\nabla f_{\xi_{k,j}}(\boldsymbol{x}_k)-\nabla F\left(\boldsymbol{x}_k\right)\Bigg\|_2^2\Bigg| \mathcal{F}_k\Bigg]\nonumber \\
		=&\frac{1}{B}\mathbb{E}\Bigg[\Bigg\|\frac{1}{\textstyle n p_{\scriptscriptstyle \xi_{k,1}}}\nabla f_{\xi_{k,1}}(\boldsymbol{x}_k)-\nabla F\left(\boldsymbol{x}_k\right)\Bigg\|_2^2\Bigg| \mathcal{F}_k\Bigg].\label{eq:part3_mini}
	\end{align}
	
	Since $f_i(\boldsymbol{x})$ is $L_i$-smooth, $i=1,\ldots,n$, the following inequality holds:
	\begin{align}
		&\mathbb{E}\Bigg[\Bigg\|\frac{1}{\textstyle n p_{\scriptscriptstyle \xi_{k,1}}}\nabla f_{\xi_{k,1}}(\boldsymbol{x}_k)-\nabla F\left(\boldsymbol{x}_k\right)\Bigg\|_2^2 \Bigg| \mathcal{F}_k\Bigg]\nonumber\\
		=&\mathbb{E}\Bigg[\Bigg\|\frac{1}{\textstyle n p_{\scriptscriptstyle \xi_{k,1}}}\nabla f_{\xi_{k,1}}(\boldsymbol{x}_k)-\nabla F\left(\boldsymbol{x}_k\right)-\Bigg(\frac{1}{\textstyle n p_{\scriptscriptstyle \xi_{k,1}}}\nabla f_{\xi_{k,1}}\left(\boldsymbol{x}^*\right)-\nabla F(\boldsymbol{x}^*)\Bigg)\nonumber \\
		&+\Bigg(\frac{1}{\textstyle n p_{\scriptscriptstyle \xi_{k,1}}}\nabla f_{\xi_{k,1}}\left(\boldsymbol{x}^*\right)-\nabla F(\boldsymbol{x}^*)\Bigg)\Bigg\|_{2}^{2}  \Bigg|  \mathcal{F}_k\Bigg] \nonumber \\
		\leq &\mathbb{E}\Bigg[2\Bigg\|\frac{1}{\textstyle n p_{\scriptscriptstyle \xi_{k,1}}}\nabla f_{\xi_{k,1}}(\boldsymbol{x}_k)-\nabla F\left(\boldsymbol{x}_k\right)-\Bigg(\frac{1}{\textstyle n p_{\scriptscriptstyle \xi_{k,1}}}\nabla f_{\xi_{k,1}}\left(\boldsymbol{x}^*\right)-\nabla F(\boldsymbol{x}^*)\Bigg)\Bigg\|_2^2 \nonumber \\
		&+2\Bigg\|\frac{1}{\textstyle n p_{\scriptscriptstyle \xi_{k,1}}}\nabla f_{\xi_{k,1}}\left(\boldsymbol{x}^*\right)-\nabla F(\boldsymbol{x}^*)\Bigg\|_2^2\Bigg|\mathcal{F}_k\Bigg]\nonumber\\
		=&2\mathbb{E}\Bigg[\Bigg\|\frac{1}{\textstyle n p_{\scriptscriptstyle \xi_{k,1}}}\nabla f_{\xi_{k,1}}(\boldsymbol{x}_k)-\frac{1}{\textstyle n p_{\scriptscriptstyle \xi_{k,1}}}\nabla f_{\xi_{k,1}}(\boldsymbol{x}^*)\Bigg\|_2^2 \nonumber \\
		&-2\Bigg\langle\frac{1}{\textstyle np_{\scriptscriptstyle \xi_{k,1}}}\nabla f_{\xi_{k,1}}(\boldsymbol{x}_k)-\frac{1}{\textstyle np_{\scriptscriptstyle \xi_{k,1}}}\nabla f_{\xi_{k,1}}(\boldsymbol{x}^*),\nabla F(\boldsymbol{x}_k)-\nabla F(\boldsymbol{x}^*)\Bigg\rangle \nonumber \\
		&+\|\nabla F(\boldsymbol{x}_k)-\nabla F(\boldsymbol{x}^*)\|_2^{2}+ \Bigg\|\frac{1}{\textstyle n p_{\scriptscriptstyle \xi_{k,1}}}\nabla f_{\xi_{k,1}}(\boldsymbol{x}^*)\Bigg\|_2^2
		\nonumber \\
		&-2\Bigg\langle\frac{1}{\textstyle n p_{\scriptscriptstyle \xi_{k,1}}}\nabla f_{\xi_{k,1}}(\boldsymbol{x}^*), \nabla F(\boldsymbol{x}^*)\Bigg\rangle+\|\nabla F(\boldsymbol{x}^*)\|_2^2\Bigg|\mathcal{F}_k\Bigg] \nonumber \\
		=&\frac{2}{n}\sum_{i=1}^n\frac{1}{np_i}\left\|\nabla f_{i}(\boldsymbol{x}_k)-\nabla f_{i}\left(\boldsymbol{x}^*\right)\right\|_2^2 - 2\|\nabla F(\boldsymbol{x}_k)-\nabla F(\boldsymbol{x}^*)\|_2^2 \nonumber \\
		&+\frac{2}{n}\sum_{i=1}^n\frac{1}{np_i}\left\|\nabla f_{i}\left(\boldsymbol{x}^*\right)\right\|_2^2-2\left\|\nabla F(\boldsymbol{x}^*)\right\|_2^2 \nonumber \\
		\leq&\frac{2}{n}\sum_{i=1}^n\frac{1}{np_i}\left\|\nabla f_{i}(\boldsymbol{x}_k)-\nabla f_{i}\left(\boldsymbol{x}^*\right)\right\|_2^2+\frac{2}{n}\sum_{i=1}^n\frac{1}{np_i}\left\|\nabla f_{i}\left(\boldsymbol{x}^*\right)\right\|_2^2-2\left\|\nabla F(\boldsymbol{x}^*)\right\|_2^2 \nonumber \\
		\leq &\frac{2}{n} \sum_{i=1}^n \frac{L_i^2}{n p_i} \|\boldsymbol{x}_k-\boldsymbol{x}^*\|_2^2 + \frac{2}{n}\sum_{i=1}^n\frac{1}{np_i}\left\|\nabla f_{i}\left(\boldsymbol{x}^*\right)\right\|_2^2-2\left\|\nabla F(\boldsymbol{x}^*)\right\|_2^2\label{eq:part3_mini_con}
	\end{align}
	
	The third equality in \eqref{eq:part3_mini_con} follows from the definition of the expectation with respect to the discrete random variable $\xi_{k,1}$. Specifically, for any vector $\boldsymbol{v}_{\xi_{k,1}}$ that depends on  $\xi_{k,1}$, the expectation of the squared norm satisfies $$\mathbb{E}\bigg[\bigg\|\frac{1}{\textstyle n p_{\scriptscriptstyle \xi_{k,1}}} \boldsymbol{v}_{\xi_{k,1}}\bigg\|_2^2\bigg]=\sum_{i=1}^n p_i \frac{1}{(n p_i)^2}\|\boldsymbol{v}_i\|_2^2=\frac{1}{n}\sum_{i=1}^n \frac{1}{\textstyle n p_{i}} \|\boldsymbol{v}_{i}\|_2^2. $$
	
	Since $F(\boldsymbol{x})+h(\boldsymbol{x})$ is $\lambda$-strongly convex, the inequality \eqref{eq:str-cvx7} gives an upper bound on $\|\boldsymbol{x}_k - \boldsymbol{x}^*\|_2^2$ by taking $\boldsymbol{x} = \boldsymbol{x}_k$ and $\boldsymbol{y} = \boldsymbol{x}^*$, and using $\nabla F(\boldsymbol{x}^*) + \nabla h(\boldsymbol{x}^*)=\boldsymbol{0}$. Replacing the term $\|\boldsymbol{x}_k - \boldsymbol{x}^*\|_2^2$ in \eqref{eq:part3_mini_con} by this bound yields
	\begin{equation}
		\frac{1}{B}\mathbb{E}\bigg[\bigg\|\frac{1}{\textstyle n p_{\scriptscriptstyle\xi_{k,1}}}\nabla f_{\xi_{k,1}}(\boldsymbol{x}_k)-\nabla F(\boldsymbol{x}_k)\bigg\|_{2}^{2}\bigg|\mathcal{F}_k\bigg]
		\leq s_{\scriptscriptstyle F}\|\nabla F\left(\boldsymbol{x}_k\right)+\nabla h\left(\boldsymbol{x}_k\right)\|_{2}^{2}+2\sigma_{\scriptscriptstyle F},\label{eq:part4_mini}
	\end{equation}
	where $s_{\scriptscriptstyle F}=\frac{2}{B\lambda^2n}\sum_{i=1}^{n} \frac{L_i^2}{np_i}$ and $\sigma_{\scriptscriptstyle F}=\tfrac{1}{B}\left(\frac{1}{n}\sum_{i=1}^n\frac{1}{np_i}\left\|\nabla f_i\left(\boldsymbol{x}^*\right)\right\|_2^2-\left\|\nabla F(\boldsymbol{x}^*)\right\|_2^2\right)$.
	
	Combining \eqref{eq:part1_mini}, \eqref{eq:part2_mini}, \eqref{eq:part3_mini}, and \eqref{eq:part4_mini}, we obtain
	\begin{align}
		&\mathbb{E}\left[\left\|\boldsymbol{x}_{k+1}-\boldsymbol{x}^*\right\|_2^2 \Big| \mathcal{F}_k\right] \nonumber \\ 
		\leq& \Bigg(1-\frac{2\eta\lambda\Big(\frac{1}{n}\sum\limits_{i=1}^{n} L_i+L_h\Big)}{\frac{1}{n}\sum\limits_{i=1}^{n} L_i + L_h+\lambda}\Bigg)\left\|\boldsymbol{x}_k-\boldsymbol{x}^*\right\|_2^2 \nonumber \\
		&- \Bigg(\frac{2\eta}{\frac{1}{n}\sum\limits_{i=1}^{n} L_i + L_h+\lambda}-\left(s_{\scriptscriptstyle F}+1\right)\eta^2\Bigg)\|\nabla F(\boldsymbol{x}_k)+\nabla h(\boldsymbol{x}_k)\|_{2}^{2} + 2\eta^2\sigma_{\scriptscriptstyle F}. \label{eq:final_part1_mini}
	\end{align}
	
	Let $q= (2\eta\lambda(\frac{1}{n}\sum_{i=1}^{n} L_i+L_h))/(\frac{1}{n}\sum_{i=1}^{n} L_i+L_h+\lambda)$. If $0<\eta \leq 2/((s_{\scriptscriptstyle F} +1)(\lambda + \frac{1}{n}\sum_{i=1}^{n} L_i +L_h))$, then we have $q>0$, and
	\begin{align*}
		1 - q
		\geq& 1 - \frac{4\lambda\Big(\frac{1}{n}\sum\limits_{i=1}^n L_i + L_h\Big)}
		{\left(s_{\scriptscriptstyle F}+1\right)\Big(\frac{1}{n}\sum\limits_{i=1}^{n} L_i + L_h+\lambda\Big)^2}\\
		=&\frac{\lambda^2 - 2\lambda\Big(\frac{1}{n}\sum\limits_{i=1}^n L_i+L_h\Big)
			+\Big(\frac{1}{n}\sum\limits_{i=1}^n L_i+L_h\Big)^2
			+\lambda^2 s_{\scriptscriptstyle F}
		}
		{\left(s_{\scriptscriptstyle F}+1\right)\Big(\frac{1}{n}\sum\limits_{i=1}^{n} L_i + L_h+\lambda\Big)^2}\\
		&+ \frac{2\lambda s_{\scriptscriptstyle F}\Big(\frac{1}{n}\sum\limits_{i=1}^n L_i+L_h\Big)
			+ s_{\scriptscriptstyle F}\Big(\frac{1}{n}\sum\limits_{i=1}^n L_i+L_h\Big)^2}{\left(s_{\scriptscriptstyle F}+1\right)\Big(\frac{1}{n}\sum\limits_{i=1}^{n} L_i + L_h+\lambda\Big)^2}\\
		>&\frac{\lambda^2 - 2\lambda\Big(\frac{1}{n}\sum\limits_{i=1}^n L_i+L_h\Big)
			+\Big(\frac{1}{n}\sum\limits_{i=1}^n L_i+L_h\Big)^2
			+\lambda^2 s_{\scriptscriptstyle F}
		}
		{\left(s_{\scriptscriptstyle F}+1\right)\Big(\frac{1}{n}\sum\limits_{i=1}^{n} L_i + L_h+\lambda\Big)^2}\\
		&- \frac{2\lambda s_{\scriptscriptstyle F}\Big(\frac{1}{n}\sum\limits_{i=1}^n L_i+L_h\Big)
			+ s_{\scriptscriptstyle F}\Big(\frac{1}{n}\sum\limits_{i=1}^n L_i+L_h\Big)^2}{\left(s_{\scriptscriptstyle F}+1\right)\Big(\frac{1}{n}\sum\limits_{i=1}^{n} L_i + L_h+\lambda\Big)^2}\\
		=&\Bigg(\frac{\frac{1}{n}\sum\limits_{i=1}^n L_i+L_h-\lambda}
		{\frac{1}{n}\sum\limits_{i=1}^n L_i+L_h+\lambda}\Bigg)^2\geq 0.
	\end{align*}
	
	Since $\frac{2\eta}{\frac{1}{n}\sum_{i=1}^{n} L_i + L_h+\lambda}-\left(s_{\scriptscriptstyle F}+1\right)\eta^2$ $\geq 0$, \eqref{eq:final_part1_mini} reduces to
	\begin{equation}\label{eq:final_part2_mini}
		\mathbb{E}\left[\left\|\boldsymbol{x}_{k+1}-\boldsymbol{x}^*\right\|_2^2 \middle| \mathcal{F}_k\right] \leq \left(1-q\right)\left\|\boldsymbol{x}_k-\boldsymbol{x}^*\right\|_2^2 + 2\eta^2 \sigma_{\scriptscriptstyle F}.
	\end{equation}
	
	Finally, we have
	
	\begin{align}
		\mathbb{E}\left[\left\|\boldsymbol{x}_{k+1}-\boldsymbol{x}^*\right\|_2^2 -\frac{2\eta^2 \sigma_{\scriptscriptstyle F}}{q}\middle| \mathcal{F}_k\right]\leq \left(1-q\right)\left(\left\|\boldsymbol{x}_k-\boldsymbol{x}^*\right\|_2^2-\frac{2 \eta^2 \sigma_{\scriptscriptstyle F}}{q}\right).
	\end{align}
	
	Extending the same procedure to $k = K-1$ and subsequently to others completes the proof.
\end{proof}

\Cref{thm:1} shows that the algorithm converges linearly to a neighborhood of the minimizer $\boldsymbol{x}^*$ with the radius $R$. To study how the stochastic approximation of the gradient of the SG-term affects the algorithm's performance, we compare \Cref{thm:1} with the classic convergence result of the deterministic gradient descent \cite{nesterov2018lectures}. Recall that for the iteration $\boldsymbol{x}_{k+1} = \boldsymbol{x}_k - \gamma(\nabla F(\boldsymbol{x}_k)+\nabla h(\boldsymbol{x}_k))$, if the step size satisfies $0 < \gamma \leq 2/(\tfrac{1}{n}\sum_{i=1}^{n} L_i +L_h+\lambda)$, then
\begin{equation*}
	\|\boldsymbol{x}_{k+1}-\boldsymbol{x}^*\|_2^2 \leq
	\Bigg( 1 - \frac{2\gamma \lambda (\frac{1}{n}\sum\limits_{i=1}^{n}L_i+L_h)}{\frac{1}{n}\sum\limits_{i=1}^{n}L_i+L_h+\lambda} \Bigg)^k
	\|\boldsymbol{x}_1-\boldsymbol{x}^*\|_2^2.
\end{equation*}
In particular, choosing the step size $\gamma=2/(\tfrac{1}{n}\sum_{i=1}^{n} L_i +L_h+\lambda)$ yields
\begin{equation}
	\|\boldsymbol{x}_{k+1}-\boldsymbol{x}^*\|_2^2 \leq
	\Bigg(\frac{\frac{1}{n}\sum\limits_{i=1}^{n}L_i+L_h-\lambda}{\frac{1}{n}\sum\limits_{i=1}^{n}L_i+L_h+\lambda}\Bigg)^{2k}
	\|\boldsymbol{x}_1-\boldsymbol{x}^*\|_2^2 . \label{eq:gd_opt}
\end{equation}

Contrasting \Cref{thm:1} with this classic result indicates that approximating $\nabla F(\boldsymbol{x})$ introduces two parameters, $\sigma_{\scriptscriptstyle F}$ and $s_{\scriptscriptstyle F}$. 
As shown in \eqref{eq:part4_mini}, these parameters stem from the bound of the variance of the stochastic gradient,
\begin{equation*}
	\frac{1}{B}\mathbb{E}\bigg[\bigg\|\frac{1}{\textstyle n p_{\scriptscriptstyle\xi_{k,1}}}\nabla f_{\xi_{k,1}}(\boldsymbol{x}_k)-\nabla F(\boldsymbol{x}_k)\bigg\|_{2}^{2}\bigg|\mathcal{F}_k\bigg]
	\leq s_{\scriptscriptstyle F}\|\nabla F\left(\boldsymbol{x}_k\right)+\nabla h\left(\boldsymbol{x}_k\right)\|_{2}^{2}+2\sigma_{\scriptscriptstyle F},
\end{equation*}
where
\begin{equation*}
	s_{\scriptscriptstyle F}=\frac{2}{B\lambda^2 n}\sum_{i=1}^{n} \frac{L_i^2}{n p_i},\; \sigma_{\scriptscriptstyle F}=\frac{1}{B}\Bigg(\frac{1}{ n}\sum_{i=1}^{n}\frac{1}{n p_i}\bigl\|\nabla f_i(\boldsymbol{x}^*)\bigr\|_2^2-\bigl\|\nabla F(\boldsymbol{x}^*)\bigr\|_2^2\Bigg).
\end{equation*}
Also, by definition, these two parameters depend mainly on the SG-term. The parameter $\sigma_{\scriptscriptstyle F}$ quantifies the variance of the stochastic gradient at the minimizer $\boldsymbol{x}^*$. This term arises from the stochasticity of sampling the component functions $f_i$, $i=1,\ldots, n$, capturing the inherent noise that persists even when the algorithm reaches the minimizer.
The parameter $s_{\scriptscriptstyle F}$ can be regarded as the relationship between the variance of the stochastic gradient and the full gradient $\nabla F\left(\boldsymbol{x}_k\right)+\nabla h\left(\boldsymbol{x}_k\right)$, indicating that the variance of the stochastic gradient  grows proportionally as the full gradient becomes larger.  

We then consider how $s_{\scriptscriptstyle F}$ and $\sigma_{\scriptscriptstyle F}$ affect the choice of step size. According to \eqref{eq:eta4} in \Cref{thm:1}, we have
\begin{equation*}
	0<\eta \leq \frac{2}{\bigl(s_{\scriptscriptstyle F} +1\bigr)\Big(\frac{1}{n}\sum\limits_{i=1}^{n} L_i + L_h+\lambda\Big)}.
\end{equation*}
Compared with deterministic gradient descent, the upper bound on the step size $\eta$ for the mini-batch SGD is additionally influenced by $s_{\scriptscriptstyle F}$. A larger $s_{\scriptscriptstyle F}$ reduces the allowable step size. In other words, we need to adopt a conservative step size to compensate for the uncertainty.

To further investigate the impact of the stochastic approximation, we present the following corollary.

\begin{corollary}\label{cor:1}
	Under the setting of \Cref{thm:1}, choosing
	\begin{equation}\label{eq:eta_opt}
		\eta=\bar{\eta}:=\frac{2}{\left(s_{\scriptscriptstyle F}+1\right)\Big(\frac{1}{n}\sum\limits_{i=1}^{n}L_i+L_h+\lambda\Big)}
	\end{equation}
	maximizes the rate
	\begin{equation}\label{eq:q_opt}
		q=\bar{q}:=\frac{4\lambda\Big(\frac{1}{n}\sum\limits_{i=1}^{n}L_i+L_h\Big)}{\left(s_{\scriptscriptstyle F}+1\right)\Big(\frac{1}{n}\sum\limits_{i=1}^{n} L_i + L_h+\lambda \Big)^{2}},
	\end{equation}
	and the radius becomes
	\begin{equation}\label{eq:R_opt}
		R=\bar{R}:=\frac{2\sigma_{\scriptscriptstyle F}}{\lambda\left(s_{\scriptscriptstyle F}+1\right)\Big(\frac{1}{n}\sum\limits_{i=1}^{n}L_i+L_h\Big)} .
	\end{equation}
	The constants $s_{\scriptscriptstyle F}$ and $\sigma_{\scriptscriptstyle F}$ are defined in~\eqref{eq:s_F} and~\eqref{eq:sigma_F}, respectively.
\end{corollary}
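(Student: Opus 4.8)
The plan is to view every quantity appearing in \eqref{eq:q4} as a function of the single free parameter $\eta$, all the constants $\lambda$, $\{L_i\}_{i=1}^{n}$, $L_h$, $s_{\scriptscriptstyle F}$, $\sigma_{\scriptscriptstyle F}$ being fixed, and to exploit their explicit monotone dependence on $\eta$. Write $\bar L:=\tfrac{1}{n}\sum_{i=1}^{n}L_i+L_h$ for the smoothness constant of $\psi$. By \eqref{eq:q4}, $q=q(\eta)=\frac{2\lambda\bar L}{\lambda+\bar L}\,\eta$ is an affine function of $\eta$ with strictly positive slope, hence strictly increasing on the admissible interval $(0,\bar\eta]$ cut out by \eqref{eq:eta4}. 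Since Theorem~\ref{thm:1} guarantees $q\in(0,1)$ throughout this interval, a larger $q$ makes the geometric factor $(1-q)^K$ in \eqref{eq:linear_convergence4} strictly smaller; thus the fastest linear rate consistent with Theorem~\ref{thm:1} is achieved at the right endpoint $\eta=\bar\eta$ given in \eqref{eq:eta_opt}, which proves the optimality assertion.

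Then I would simply substitute $\eta=\bar\eta$ into the two formulas of \eqref{eq:q4}. For $q$: multiplying the slope $\frac{2\lambda\bar L}{\lambda+\bar L}$ by $\bar\eta=\frac{2}{(s_{\scriptscriptstyle F}+1)(\lambda+\bar L)}$ yields $q=\frac{4\lambda\bar L}{(s_{\scriptscriptstyle F}+1)(\lambda+\bar L)^2}$, which is \eqref{eq:q_opt}. For $R$: from $R=2\eta^2\sigma_{\scriptscriptstyle F}/q$ and $q(\bar\eta)=\frac{2\lambda\bar L}{\lambda+\bar L}\bar\eta$, one factor of $\bar\eta$ cancels, leaving $R=\frac{\sigma_{\scriptscriptstyle F}(\lambda+\bar L)}{\lambda\bar L}\,\bar\eta$; inserting the identity $\bar\eta(\lambda+\bar L)=\frac{2}{s_{\scriptscriptstyle F}+1}$, read directly off \eqref{eq:eta_opt}, collapses this to $R=\frac{2\sigma_{\scriptscriptstyle F}}{\lambda(s_{\scriptscriptstyle F}+1)\bar L}$, i.e.\ \eqref{eq:R_opt}. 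No separate check that $\bar q\in(0,1)$ is needed, since $\bar\eta$ satisfies \eqref{eq:eta4} and this inclusion is already part of the conclusion of Theorem~\ref{thm:1}.

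There is essentially no analytic difficulty here: the argument is one monotonicity observation followed by two algebraic substitutions. The only point deserving a sentence of care is what ``maximizes the rate'' means — I would make explicit that, because $q\in(0,1)$ on the feasible range, increasing $q$ strictly shrinks the contraction factor $(1-q)^K$ in \eqref{eq:linear_convergence4}, so maximizing $q$ over the feasible step sizes is the correct formalization of ``fastest linear convergence'', even though the very same choice $\eta=\bar\eta$ also makes the radius $R=\frac{\sigma_{\scriptscriptstyle F}(\lambda+\bar L)}{\lambda\bar L}\,\eta$ as large as the step-size constraint permits — the usual rate-versus-accuracy trade-off in constant-step SGD, which I would flag in a remark rather than belabor in the proof.
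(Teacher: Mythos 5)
Your proof is correct and follows the argument the paper leaves implicit (the corollary is stated without proof): since $q$ in \eqref{eq:q4} is linear and strictly increasing in $\eta$, it is maximized at the right endpoint $\bar{\eta}$ of the interval \eqref{eq:eta4}, and the expressions \eqref{eq:q_opt} and \eqref{eq:R_opt} follow by direct substitution, exactly as you compute. Your side remark that $R=\tfrac{2\eta^{2}\sigma_{\scriptscriptstyle F}}{q}$ is likewise increasing in $\eta$ on the admissible range, so that the rate-maximizing step size also maximizes the radius, is accurate and a worthwhile observation, though not needed for the statement itself.
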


Based on \Cref{cor:1}, we discuss the impact of the SG-term on the convergence rate. Notice that the parameter $s_{\scriptscriptstyle F}$ appears in the denominator of $\bar{q}$ in \eqref{eq:q_opt}. Consequently, a larger $s_{\scriptscriptstyle F}$ results in a smaller achievable convergence rate $\bar{q}$, leading to a slower convergence process.

Moreover, the SG-term affects the radius of the convergence region. According to \eqref{eq:R_opt}, the radius $\bar{R}$ is proportional to $\sigma_{\scriptscriptstyle F}$. The denominator, which involves the convexity parameter and the Lipschitz constant, is strictly positive. So these terms only scale the magnitude of the radius. In contrast, $\sigma_{\scriptscriptstyle F}$ represents the inherent noise. It is the only parameter that determines whether the radius vanishes. Consequently, $\sigma_{\scriptscriptstyle F}$ determines whether the algorithm converges to the minimizer or just within its neighborhood.

Finally, \eqref{eq:s_F} shows that $s_{\scriptscriptstyle F}$ decreases when the batch size $B$ increases, meaning that the stochastic noise is lowered. Hence a larger step size can be used according to \eqref{eq:eta_opt}, leading to a faster rate in~\eqref{eq:q_opt}.  
At the same time, \eqref{eq:sigma_F} and~\eqref{eq:R_opt} indicate that $\sigma_{\scriptscriptstyle F}$ shrinks and thus $\bar{R}$ also shrinks as $B$ increases, so a larger batch size yields a solution which concentrates more around the minimizer $\boldsymbol{x}^*$. Letting $B\to\infty$ in \Cref{cor:1} yields
\begin{align*}
	\bar{\eta}\to\frac{2}{\frac{1}{n}\sum\limits_{i=1}^{n} L_i +L_h+\lambda},\quad\bar{q}(L_1,\ldots,L_n,L_h)\to\frac{4\lambda\Big(\frac{1}{n}\sum\limits_{i=1}^{n}L_i+L_h\Big)}
	{\Big(\frac{1}{n}\sum\limits_{i=1}^{n} L_i +L_h+\lambda\Big)^2},\quad \bar{R}\to0,
\end{align*}
which leads to
\[
\mathbb{E}\bigl[\|\boldsymbol{x}_{K+1}-\boldsymbol{x}^*\|_2^2\bigr]\leq
\Bigg(\frac{\frac{1}{n}\sum\limits_{i=1}^{n}L_i+L_h-\lambda}{\frac{1}{n}\sum\limits_{i=1}^{n}L_i+L_h+\lambda}\Bigg)^{2K}
\|\boldsymbol{x}_1-\boldsymbol{x}^*\|_2^2.
\]
Therefore, \Cref{thm:1} asymptotically matches the classic linear convergence rate of the gradient descent \eqref{eq:gd_opt} in expectation.

\section{Asymmetric Roles of Stochastic and Deterministic Gradients}\label{sec:4}
Building on the linear convergence result in \Cref{sec:3}, we now explain more explicitly how the stochastic approximation of $\nabla F(\boldsymbol{x})$ and the deterministic computation of $\nabla h(\boldsymbol{x})$ affect the convergence of the algorithm differently. 


For reference, we recall a classic analysis for SGD. Theorem 5.8 in \cite{garrigos2023handbook} (a simplified version of Theorem 3.1 in \cite{gower2019sgd}) considers the mini-batch SGD with $B=1$ and uniform sampling probabilities $p_{j}=1/n$. It states that if the step size satisfies $0<\eta\leq \frac{1}{2(\max_{1\leq i\leq n} L_i+L_h)}$, then the sequence $\{\boldsymbol{x}_k\}_{k=1}^{K+1}$ satisfies
\begin{align*}
	\mathbb{E}\left[\|\boldsymbol{x}_{K+1}-\boldsymbol{x}^*\|_2^{2}\right]
	\leq \bigl(1-\lambda\eta\bigr)^{K}\|\boldsymbol{x}_1-\boldsymbol{x}^*\|_2^{2}+\frac{2\eta}{\lambda n}\sum_{i=1}^{n}\|\nabla f_i(\boldsymbol{x}^*)+\nabla h(\boldsymbol{x}^*)\|_2^{2}.
\end{align*}

Notice that \Cref{thm:1} and the result above describe the linear convergence behavior of a same sequence when the step size $\eta$ is fixed and satisfies $0<\eta\leq \min\Big\{\frac{2}{(s_{\scriptscriptstyle F} +1)(\tfrac{1}{n}\sum_{i=1}^{n} L_i +L_h+\lambda)},\frac{1}{2(\max_{1\leq i\leq n} L_i+L_h)}\Big\}$, but they lead to different rate and different radius of the convergence region theoretically. Specifically, denote the respective rate and radius in \Cref{thm:1} as
\begin{equation*}
	q_1=\frac{2 \eta \lambda\Big(\frac{1}{n} \sum\limits_{i=1}^n L_i+L_h\Big)}{\frac{1}{n} \sum\limits_{i=1}^n L_i+L_h+\lambda}, \quad R_1=\frac{2 \eta^2 \sigma_F}{q_1}.
\end{equation*}
For the result of Theorem 5.8 in \cite{garrigos2023handbook}, we denote
\begin{equation*}
	q_2=\eta \lambda, \quad R_2=\frac{2\eta}{\lambda n}\sum_{i=1}^{n}\|\nabla f_i(\boldsymbol{x}^*)+\nabla h(\boldsymbol{x}^*)\|_2^{2}.
\end{equation*}
Then we have the following theorem.

\begin{theorem}\label{thm:41}
	Letting $B=1$, $p_j=1/n$, $j=1,\ldots,n$, and $$0<\eta\leq \min\Bigg\{\frac{2}{(s_{\scriptscriptstyle F} +1)\Big(\frac{1}{n}\sum\limits_{i=1}^{n} L_i + L_h+\lambda\Big)},\frac{1}{2\Big(\max\limits_{1\leq i\leq n} L_i+L_h\Big)}\Bigg\},$$ then the inequalities
	\begin{equation*}
		q_1 \geq q_2, \quad R_1 \leq R_2
	\end{equation*}
	hold. Furthermore, if $\tfrac{1}{n}\sum_{i=1}^n L_i+L_h>\lambda$ and $\sigma_{\scriptscriptstyle F}>0$, then the strict inequalities $q_1 > q_2$ and $R_1 < R_2$ hold.
\end{theorem}

\begin{proof}
	Observe that $q_1\geq q_2$ is equivalent to 
	\begin{align*}
		\frac{2 \eta \lambda\Big(\frac{1}{n} \sum\limits_{i=1}^n L_i+L_h\Big)}{\frac{1}{n} \sum\limits_{i=1}^n L_i+L_h+\lambda}\geq \eta \lambda 
		\Longleftrightarrow  \frac{1}{n} \sum_{i=1}^n L_i + L_h \geq \lambda,
	\end{align*}
	which holds by \Cref{def:Lip_con} and inequality \eqref{eq:str-cvx7}. Consequently, the strict inequality $q_1>q_2$ holds if and only if $\tfrac{1}{n} \sum_{i=1}^n L_i + L_h > \lambda$.
	For the radius of the convergence region, since $\nabla F(\boldsymbol{x}^*)+\nabla h(\boldsymbol{x}^*)=\nabla \psi(\boldsymbol{x}^*)=\boldsymbol{0}$, we have $\frac{1}{n} \sum_{i=1}^{n}\|\nabla f_i(\boldsymbol{x}^*)+\nabla h(\boldsymbol{x}^*)\|_2^{2}=\frac{1}{n}\sum_{i=1}^{n}\|\nabla f_i(\boldsymbol{x}^*)\|_2^2-\|\nabla F(\boldsymbol{x}^*)\|_2^{2}=\sigma_{\scriptscriptstyle F}$. Since $R_2$ can be written as $\frac{2\eta^2}{q_2n}\sum_{i=1}^{n}\|\nabla f_i(\boldsymbol{x}^*)+\nabla h(\boldsymbol{x}^*)\|_2^{2}$ and $q_1\geq q_2$, we have $R_1$ $\leq R_2$. Likewise, the strict inequality $q_1 > q_2$ implies $R_1< R_2$ when $\sigma_{\scriptscriptstyle F}>0$. 
\end{proof}

\Cref{thm:41} shows that our analysis gives a faster linear convergence rate and a smaller radius of the convergence region theoretically for the same algorithm sequence. This difference comes from the separate handling of the SG-term and DG-term in our proof.

To deepen our understanding of the algorithm, we compare the best possible convergence guarantees derived from our analysis with those from Theorem 5.8 in \cite{garrigos2023handbook}. Recall that \Cref{cor:1} defines $\bar{\eta}$ as the step size that maximizes our convergence rate to $\bar{q}$ with a corresponding $\bar{R}$. 
Similarly, we denote $\tilde{\eta}$ as the maximum step size permitted by Theorem 5.8 in \cite{garrigos2023handbook} and let $\tilde{q}$ and $\tilde{R}$ represent the resulting rate and radius, that is,
\begin{align}
	&\tilde{\eta}=\frac{1}{2\Big(\max\limits_{1\leq i\leq n} L_i+L_h\Big)}, \tilde{q}=\frac{\lambda}{2\Big(\max\limits_{1\leq i \leq n}{L_i}+L_h\Big)},\nonumber \\
	&\tilde{R}=\frac{1}{\lambda \Big(\max\limits_{1\leq i\leq n} L_i+L_h\Big) n}\sum_{i=1}^{n}\|\nabla f_i(\boldsymbol{x}^*)+\nabla h(\boldsymbol{x}^*)\|_2^{2}.\label{eq:q_tilde}
\end{align}

By writing these expressions as functions of $\{L_i\}_{i=1}^{n}$ and $L_h$, we observe that the Lipschitz constants appear asymmetrically in our results, unlike in the classic analysis. This asymmetry reflects the different influence of $f_i(\boldsymbol{x})$, $i=1,\ldots,n$, and $h(\boldsymbol{x})$, as stated more precisely in the following theorem.

\begin{theorem}\label{thm:different_impact}
	Let $B=1$, $p_j=1/n$, $j=1,\ldots,n$. Given a constant $0<c$ $<$ $\min\{L_1,$ $\ldots,$ $L_n,$ $L_h\}$, we have
	\begin{align*}
		&\bar{\eta}(L_1-c,\ldots,L_n-c,L_h)\;>\;\bar{\eta}(L_1,\ldots,L_n,L_h-c),\\
		&\bar{q}(L_1-c,\ldots,L_n-c,L_h)\;>\;\bar{q}(L_1,\ldots,L_n,L_h-c),\\
		&\tilde{\eta}(L_1-c,\ldots,L_n-c,L_h)=\tilde{\eta}(L_1,\ldots,L_n,L_h-c).\\
		&\tilde{q}(L_1-c,\ldots,L_n-c,L_h)=\tilde{q}(L_1,\ldots,L_n,L_h-c).
	\end{align*}
\end{theorem}

\begin{proof}
	Observe that
	\begin{equation*}
		\frac{\bar{\eta}(L_1-c,\ldots,L_n-c,L_h)}{\bar{\eta}(L_1,\ldots,L_n,L_h-c)}=\frac{\frac{2}{B\lambda^2 n}\sum\limits_{i=1}^{n}\frac{L_i^2}{np_i}+1}{\frac{2}{B\lambda^2 n}\sum\limits_{i=1}^{n}\frac{(L_i-c)^2}{n p_i}+1}>1,
	\end{equation*}
	and similarly,
	\begin{equation*}
		\frac{\bar{q}(L_1-c,\ldots,L_n-c,L_h)}{\bar{q}(L_1,\ldots,L_n,L_h-c)}=\frac{\frac{2}{B\lambda^2 n}\sum\limits_{i=1}^{n}\frac{L_i^2}{np_i}+1}{\frac{2}{B\lambda^2 n}\sum\limits_{i=1}^{n}\frac{(L_i-c)^2}{n p_i}+1}>1,
	\end{equation*}
	which establish the strict inequalities. The last two equalities 
	$\tilde{\eta}(L_1-c,\ldots,L_n-c,L_h)=\tilde{\eta}(L_1,\ldots,L_n,L_h-c)$ and $\tilde{q}(L_1-c,\ldots,L_n-c,L_h)=\tilde{q}(L_1,\ldots,L_n,L_h-c)$ follow directly from \eqref{eq:q_tilde}.
\end{proof}

Let $L_F=\tfrac{1}{n}\sum_{i=1}^{n}L_i$ denote the Lipschitz constant for the gradient of $F(\boldsymbol{x})$. Consider two scenarios where we lower every $L_i$ by $c$ or alternatively lower $L_h$ by $c$. Both cases reduce the total smoothness $L_F+L_h$ by the same amount. Under the classic framework, these two changes yield an identical step size $\tilde{\eta}$ and convergence rate $\tilde{q}$. 

In our framework, however, the step size $\bar{\eta}$ and rate $\bar{q}$ are sensitive to the structure of the objective function through the parameter $s_{\scriptscriptstyle F}$. Specifically, $s_{\scriptscriptstyle F}$ is reduced when $L_i$ decreases, but remains constant if only $L_h$ decreases. As a result, improving the smoothness of the SG-term leads to a larger step size and a faster convergence rate compared to an equivalent improvement in DG-term. This difference arises just because the gradients of the SG-term $F(\boldsymbol{x})$ and the DG-term $h(\boldsymbol{x})$ are handled differently by the algorithm, which is captured by our analysis.

Next, we give a simple condition where our step size bound in \Cref{thm:1} is less restrictive than the bound in Theorem 5.8 of \cite{garrigos2023handbook}. Under the same condition, the best linear convergence rate obtained by our analysis is also larger.

\begin{theorem}\label{thm:our_faster}
	Let $B=1$ and $p_{j}=1/n$ for $j=1,\ldots,n$.
	If $\tfrac{2}{n}\sum_{i=1}^{n}L_i^{2}\le \lambda^{2}$,
	then $\tilde{\eta}\leq \bar{\eta}$ and $\tilde{q}\leq \bar{q}$.
\end{theorem}

\begin{proof}
	For $\tilde{\eta}\leq \bar{\eta}$, we need to show
	\begin{equation*}
		\frac{1}{2\Big(\max\limits_{1\leq i\leq n} L_i + L_h\Big)} \leq \frac{2}{\Big(\frac{2}{\lambda^2 n}\sum\limits_{i=1}^{n}L_i^2+1\Big)\Big(\frac{1}{n} \sum \limits_{i=1}^{n}L_i +L_h + \lambda\Big)},
	\end{equation*}
	which is equivalent to proving
	\begin{equation*}
		\left(\frac{2}{n}\sum\limits_{i=1}^{n}L_i^2+\lambda^2\right)\left(\frac{1}{n} \sum \limits_{i=1}^{n}L_i +L_h + \lambda\right) \leq 4 \lambda^2 \left(\max\limits_{1\leq i\leq n} L_i + L_h\right).
	\end{equation*}
	Since $\tfrac{2}{n}\sum_{i=1}^{n}L_i^{2}\le \lambda^{2}$ and $\lambda\leq \frac{1}{n}\sum\limits_{i=1}^{n} L_i + L_h$, we have
	\begin{align*}
		\left( \frac{2}{n}  \sum_{i=1}^{n}  L_i^2 + \lambda^2 \right)  \left( \frac{1}{n}  \sum_{i=1}^{n}  L_i + L_h + \lambda \right) &\leq 4\lambda^2  \left( \frac{1}{n}  \sum_{i=1}^{n}  L_i + L_h \right)\\
		 &\leq 4\lambda^2  \left( \max_{1\leq i\leq n}  L_i + L_h \right)
	\end{align*}
	
	For $\tilde{q}\leq \bar{q}$, we need to show 
	\begin{align*}
		\frac{\lambda}{2\Big(\max\limits_{1\leq i\leq n}\{L_i\}+L_h\Big)}
		\leq \frac{4\lambda\Big(\frac{1}{n}\sum\limits_{i=1}^nL_i + L_h\Big)}{\Big(\frac{1}{n} \sum \limits_{i=1}^{n}L_i +L_h + \lambda\Big)^2\Big(\frac{2}{\lambda^2 n}\sum\limits_{i=1}^{n}L_i^2+1\Big)}.
	\end{align*}
	This is equivalent to proving
	\begin{align}
		&\left(\frac{1}{n}\sum_{i=1}^nL_i+L_h+\lambda\right)^{2}\left(\frac{2}{n}\sum_{i=1}^nL_i^2+\lambda^2\right)\nonumber \\
		\leq&8\left(\frac{1}{n}\sum_{i=1}^{n}L_i+L_h\right)\left(\max_{1\leq i\leq n}\{L_i\}+L_h\right)\lambda^2.\label{eq:cor3_ineq1}
	\end{align}
	Since $\lambda\leq \frac{1}{n}\sum_{i=1}^{n} L_i + L_h$ and $\max\limits_{1\leq i \leq n}\{L_i\}\geq\frac{1}{n}\sum\limits_{i=1}^n L_i$, we have
	\begin{align*}
		&\frac{2}{n}\sum\limits_{i=1}^n L_i^2\leq \lambda^2\\
		\Longleftrightarrow& \;4\left(\frac{1}{n}\sum_{i=1}^n L_i + L_h\right)^{2}\left(\frac{2}{n}\sum\limits_{i=1}^n L_i^2 +\lambda^2\right)\leq 8\left(\frac{1}{n}\sum_{i=1}^n L_i + L_h\right)^{2} \lambda^2\Longrightarrow \eqref{eq:cor3_ineq1}.
	\end{align*}
	The proof is complete. 
\end{proof}

Since $s_{\scriptscriptstyle F}
=\tfrac{2}{\lambda^{2}n}\sum_{i=1}^{n}L_i^{2}$,
the condition in \Cref{thm:our_faster} is equivalent to $s_{\scriptscriptstyle F}\le 1$. Recall that 
the parameter $s_{\scriptscriptstyle F}$ characterizes the proportional growth of the stochastic gradient with respect to the full gradient.
Intuitively, we treat the SG-term and the DG-term separately in our analysis, where the DG-term is highlighted. This is why our analysis admits a larger range of step sizes that still yields linear convergence, and why the largest achievable rate is also improved, when $s_{\scriptscriptstyle F}$ is small.

From an alternative perspective, \eqref{eq:L-smooth1} and \eqref{eq:str-cvx1} indicate that $\lambda \le \tfrac{1}{n}\sum_{i=1}^{n} L_i + L_h$, whereas \Cref{thm:our_faster} requires $\tfrac{2}{n}\sum_{i=1}^{n}L_i^{2}\le\lambda^{2}$. Therefore, the condition in \Cref{thm:our_faster} becomes easily checkable when the convexity parameter $\lambda$ of $\psi(\boldsymbol{x})$ is dominated by $h(\boldsymbol{x})$. For example, if $h(\boldsymbol{x})=\tfrac{L_h}{2}\|\boldsymbol{x}\|_2^{2}$, then $\lambda\ge L_h$, and the condition in \Cref{thm:our_faster} reduces to $\tfrac{2}{n}\sum_{i=1}^{n}L_i^{2}\le L_h^{2}$.
Intuitively, the DG-term now provides enough strong convexity to mitigate the noise caused by stochastic gradients, which allows a larger step size and faster convergence.

Although \Cref{thm:our_faster} compares the largest admissible step size and the best linear rate of the two analyses, it leaves open if the convergence radius of our result is smaller when the optimal step size is adopted for each analysis. We now present a sufficient condition under which our framework yields superior guarantees in terms of both the linear rate and the convergence radius.

We focus on $B=1$ and uniform sampling $p_j=1/n$, $j=1,\ldots,n$. According to \eqref{eq:q_opt}, \eqref{eq:R_opt}, and \eqref{eq:q_tilde}, the inequalities $\bar{q}\ge \tilde{q}$ and $\bar{R}\le \tilde{R}$ are equivalent to
\begin{equation}\label{eq:sandwich4}
	\frac{2\Big(\max\limits_{1\leq i \leq n} L_i+L_h\Big)}{L_F+L_h}\le 1+\frac{2}{\lambda^2 n}\sum_{i=1}^{n}L_i^2
	\le \frac{8\Big(\max\limits_{1\leq i \leq n} L_i+L_h\Big)(L_F+L_h)}{(L_F+L_h+\lambda)^2},
\end{equation}
where $L_F=\tfrac{1}{n} \sum_{i=1}^n L_i$.
To clearly show when \eqref{eq:sandwich4} holds, we first define
\[
\lambda_{+}=\sqrt{\frac{\bigl(\frac{2}{n}\sum_{i=1}^{n}L_i^2\bigr)(L_F+L_h)}
	{2(\max\limits_{1\leq i \leq n} L_i+L_h)-(L_F+L_h)}}.
\]
Moreover, let $\lambda_{-}$ be the unique number in the interval $0<\lambda\le L_F+L_h$ that satisfies
\begin{equation}\label{eq:lambdamin4}
	1+\frac{2}{\lambda^2 n}\sum_{i=1}^{n}L_i^2
	=\frac{8\Big(\max\limits_{1\leq i \leq n} L_i+L_h\Big)(L_F+L_h)}{(L_F+L_h+\lambda)^2},
\end{equation}
when
\begin{equation}
	\frac{2}{n}\sum_{i=1}^{n}L_i^{2}\le (L_F+L_h)\left(2\left(\max\limits_{1\leq i \leq n} L_i+L_h\right)-(L_F+L_h)\right).\label{eq:compatibility}
\end{equation}

The following proof of \Cref{thm:sandwich4} shows these quantities are well-defined.

\begin{theorem}\label{thm:sandwich4}
	Let $B=1$ and $p_j=1/n$ for $j=1,\ldots,n$. The inequalities $\bar{q}\ge \tilde{q}$ and $\bar{R}\le \tilde{R}$ hold if $\frac{2}{n}\sum_{i=1}^{n}L_i^{2}\le (L_F+L_h)\bigl(2(\max_{1\leq i \leq n} L_i+L_h)-(L_F+L_h)\bigr)$ and $\lambda_{-}\le \lambda\le \lambda_{+}$.
\end{theorem}

\begin{proof}
	As shown above, the conditions $\bar{q}\ge \tilde{q}$ and $\bar{R}\le \tilde{R}$ are equivalent to \eqref{eq:sandwich4}. We therefore study when \eqref{eq:sandwich4} holds. Since $\psi(\boldsymbol{x})$ is $(L_F+L_h)$-smooth and $\lambda$-strongly convex and according to \Cref{def:Lip_con} and inequality \eqref{eq:str-cvx7}, we have $\lambda\le L_F+L_h$, so it suffices to work on the interval $0<\lambda\le L_F+L_h$.
	
	The left inequality in \eqref{eq:sandwich4} is equivalent to
	\[
	\frac{2}{\lambda^2 n}\sum_{i=1}^{n}L_i^2 \ge \frac{2\Big(\max\limits_{1\leq i \leq n} L_i+L_h\Big)}{L_F+L_h}-1.
	\]
	Since $\max_{1\leq i \leq n} L_i\ge L_F$, the quantity $2(\max_{1\leq i \leq n} L_i+L_h)-(L_F+L_h)$ is positive, and the above inequality is equivalent to $\lambda\le \lambda_{+}$.
	
	For the right inequality in \eqref{eq:sandwich4}, rewrite it as
	\[
	\frac{2}{n}\sum_{i=1}^{n}L_i^2
	\le \lambda^2\Bigg(\frac{8\Big(\max\limits_{1\leq i \leq n} L_i+L_h\Big)(L_F+L_h)}{(L_F+L_h+\lambda)^2}-1\Bigg).
	\]
	Define
	\[
	\phi(\lambda)=\lambda^2\Bigg(\frac{8\Big(\max\limits_{1\leq i \leq n} L_i+L_h\Big)(L_F+L_h)}{(L_F+L_h+\lambda)^2}-1\Bigg),
	\qquad 0<\lambda\le L_F+L_h.
	\]
	Differentiating gives
	\[
	\phi'(\lambda)=2\lambda\Bigg(\frac{8\Big(\max\limits_{1\leq i \leq n} L_i+L_h\Big)(L_F+L_h)^2}{(L_F+L_h+\lambda)^3}-1\Bigg).
	\]
	For $0<\lambda<L_F+L_h$, we have $(L_F+L_h+\lambda)^3<8(L_F+L_h)^3$, which implies $\phi'(\lambda)>0$. Hence $\phi$ is strictly increasing on $0<\lambda\le L_F+L_h$. Moreover,
	\[
	\lim_{\lambda\to 0^{+}}\phi(\lambda)=0,\qquad
	\phi(L_F+L_h)=(L_F+L_h)\left(2\left(\max\limits_{1\leq i \leq n} L_i+L_h\right)-(L_F+L_h)\right).
	\]
	Consequently, the right inequality in \eqref{eq:sandwich4} can hold if
	\[
	\frac{2}{n}\sum_{i=1}^{n}L_i^{2}\le (L_F+L_h)\left(2\left(\max\limits_{1\leq i \leq n} L_i+L_h\right)-(L_F+L_h)\right),
	\]
	which is exactly \eqref{eq:compatibility}. Under this condition, the strict monotonicity of $\phi$ ensures a unique $\lambda_{-}\in (0, L_F+L_h]$ such that $\phi(\lambda_{-})=\frac{2}{n}\sum_{i=1}^{n}L_i^{2}$, which is exactly \eqref{eq:lambdamin4}. The right inequality in \eqref{eq:sandwich4} then holds when $\lambda\ge \lambda_{-}$.
	
	Combining the two parts shows that \eqref{eq:sandwich4} holds if \eqref{eq:compatibility} holds and $\lambda_{-}\le \lambda\le \lambda_{+}$. It remains to show that the interval is nonempty when \eqref{eq:compatibility} holds. First, \eqref{eq:compatibility} implies that $\lambda_{+}\le L_F+L_h$. If $\lambda_{+}<L_F+L_h$, then
	\begin{align*}
		&\phi(\lambda_{+})-\Bigg(\frac{2\Big(\max\limits_{1\leq i \leq n} L_i+L_h\Big)}{L_F+L_h}-1\Bigg)\lambda_{+}^2 \\
		=&\frac{2\Big(\max\limits_{1\leq i \leq n} L_i+L_h\Big)\lambda_{+}^2\bigl(L_F+L_h-\lambda_{+}\bigr)\bigl(3(L_F+L_h)+\lambda_{+}\bigr)}{(L_F+L_h)(L_F+L_h+\lambda_{+})^2}>0,
	\end{align*}
	which implies $\phi(\lambda_{+})>\frac{2}{n}\sum_{i=1}^{n}L_i^{2}$ and hence $\lambda_{-}<\lambda_{+}$. If $\lambda_{+}=L_F+L_h$, then \eqref{eq:compatibility} holds with equality and $\phi(\lambda_{+})=\frac{2}{n}\sum_{i=1}^{n}L_i^{2}$. So the definition of $\lambda_{-}$ and the strict monotonicity of $\phi$ gives $\lambda_{-}=\lambda_{+}$. This completes the proof. 
\end{proof}

A simple sufficient rule for \eqref{eq:compatibility} is $L_h\ge (\sqrt{2}-1)\max_{1\leq i \leq n} L_i$. Indeed, $L_i^2\le \max_{1\leq j \leq n} \{L_j\}L_i$ yields $\frac{2}{n}\sum_{i=1}^{n}L_i^{2}\le 2\max_{1\leq i \leq n} \{L_i\}L_F$, and we have

\begin{equation*}
	(L_F+L_h)\Bigl(2\bigl(\max_{1\le i\le n}L_i+L_h\bigr)-(L_F+L_h)\Bigr)=
	L_h^2+2\max_{1\le i\le n}\{L_i\}L_h+2\max_{1\le i\le n}\{L_i\}L_F-L_F^2.
\end{equation*}

Under the bound $L_h\ge (\sqrt{2}-1)\max_{1\leq i \leq n} L_i$, the inequality $$L_h^2+2\max\limits_{1\leq i \leq n} \{L_i\}L_h\ge \max\limits_{1\leq i \leq n} L_i^2\ge L_F^2$$holds, and \eqref{eq:compatibility} follows.

\section{Numerical Experiments}\label{sec:5}
This section presents numerical experiments to support our theoretical results and provide further discussion. We run all tests in MATLAB R2023b on a desktop computer with 64 GB RAM and an Intel Core i9-12900K CPU.

The logistic regression problem discussed in \Cref{sec:2.3},
\begin{equation*}
	\min_{\boldsymbol{x} \in \mathbb{R}^d}
	\frac{1}{n}\sum_{i=1}^{n}\log\bigl(1+\exp(-b_i \boldsymbol{a}_i^{\top}\boldsymbol{x})\bigr)
	+\frac{L_h}{2}\|\boldsymbol{x}\|_2^2,
\end{equation*}
is selected as a concrete instance of the problem \eqref{eq:opt1}, that is, $f_i(\boldsymbol{x})=\log(1+\exp(-b_i \boldsymbol{a}_i^{\top} \boldsymbol{x}))$,  $i=1,\ldots,n$, and $ h(\boldsymbol{x})=\frac{L_h}{2}\|\boldsymbol{x}\|_2^2$. We set $n = 10^{6}$ and $d = 10^{2}$. The Lipschitz constants are $L_i=\|\boldsymbol{a}_i\|_2^2/4$ for $\nabla f_i$ and $L_h$ for $\nabla h$. The convexity parameter of $\psi(\boldsymbol{x})$ is $\lambda = L_h$. Following \cite{gower2019sgd}, we independently draw every element of each $\boldsymbol{a}_i$ from the standard normal distribution, and each $b_i$ from $\{1,-1\}$ uniformly.  
We use uniform sampling $p_i = 1/n$ and set $\boldsymbol{x}_1 = (10,\dots,10)^{\top}$.  
The optimal solution $\boldsymbol{x}^{*}$ is precomputed via gradient descent until $\|\nabla\psi(\boldsymbol{x}^{*})\|_2 \le 10^{-12}$. Results are averaged over one hundred runs.

\subsection{Different impacts of the SG-term and DG-term on convergence}

\subsubsection{Convergence rate}\label{sec:4.1.1}

The Lipschitz constant for the gradient of $F(\boldsymbol{x})$ is $L_F = \tfrac{1}{n}\sum_{i=1}^{n} L_i$, which equals 25.002 (rounded to the nearest thousandth) in this instance. To show how the SG-term $F(\boldsymbol{x})$ and DG-term $h(\boldsymbol{x})$ affect the convergence of the algorithm differently, we test $L_h = 100, 50, 25, 10,$ and $5$ according to the scale of $L_F$. These settings give three scenarios:  
(a) when $L_h$ is $100$ or $50$, we have $L_h > L_F$, so the DG-term $h(\boldsymbol{x})$ provides sufficient strong convexity and its gradient $\nabla h(\boldsymbol{x})$ serves as the main component of the descent direction;  
(b) when $L_h = 25$, the Lipschitz constant for the gradient of the SG-term and the convexity parameter of the DG-term are roughly equal;  
(c) when $L_h$ equals $10$ or $5$, we have $L_h < L_F$, and the noise introduced by the stochastic gradient of the SG-term $F(\boldsymbol{x})$ has a greater impact on the algorithm. In addition, set the batch size to $B = 1$ and denote the step size proposed in \Cref{cor:1} as $\eta_1 := \frac{2}{\left(s_{\scriptscriptstyle F}+1\right)( \tfrac{1}{n}\sum_{i=1}^{n} L_i + L_h+\lambda)}$, and the step size in \cite{garrigos2023handbook} as $\eta_2 := \frac{1}{2(\max_{1\leq i\leq n}\{L_i\} + L_h)}$.

We compare these two step sizes in \Cref{pic:compare}. The hollow-circle blue curve and the solid-circle green curve correspond to $\eta_1$ and $\eta_2$, respectively. We plot the relative error $\tfrac{\|\boldsymbol{x}_k-\boldsymbol{x}^*\|_2}{\|\boldsymbol{x}_1-\boldsymbol{x}^*\|_2}$ over iterations. Each curve initially shows linear decay, confirming linear convergence. As the iterations proceed, each curve flattens out to an almost horizontal line, showing that the error no longer decreases and the algorithm has entered a neighborhood of the minimizer. These match \Cref{thm:1} and \Cref{cor:1}.

\begin{figure}[H]
	\centering
	\includegraphics[width=1\textwidth]{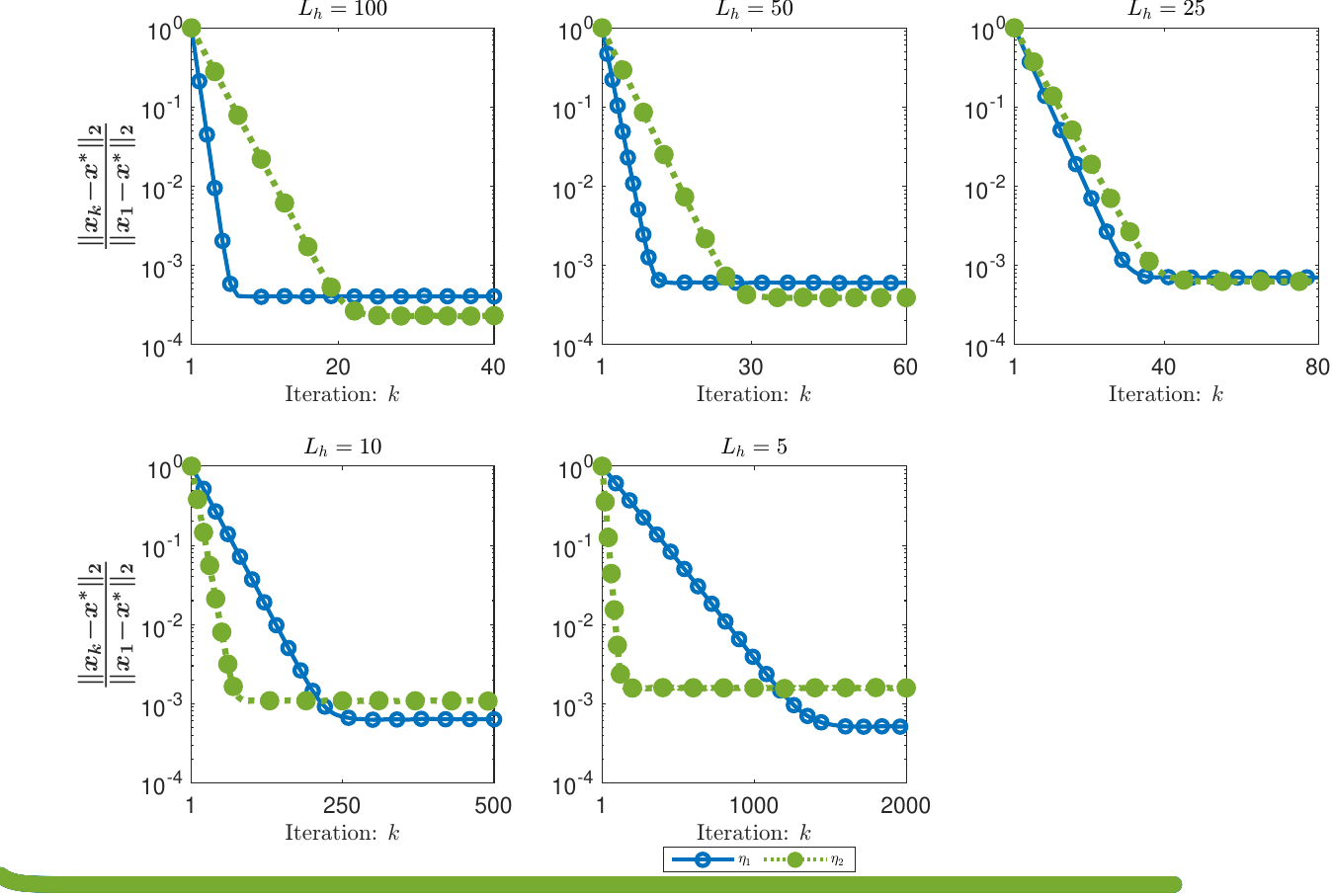}
	\caption{Performance comparison of mini-batch SGD under step sizes $\eta_1$ and $\eta_2$ on problems with different $L_h$.}\label{pic:compare}
\end{figure}

When $L_h$ is large, the first two subplots in \Cref{pic:compare} show that the linear part of the hollow-circle blue curve is steeper than the solid-circle green curve. Hence the linear convergence rate is higher with $\eta_1$, in agreement with \Cref{thm:our_faster}. The proposed step size takes advantage of the sufficient strong convexity provided by the DG-term and therefore attains faster convergence. When $L_h = 25$, namely $L_h \approx L_F$, we have $\tfrac{2}{n}\sum_{i=1}^{n} L_i^2 = 1275.233$ (rounded to the nearest thousandth) and $\lambda^2 = L_h^2 = 625$ by direct computation, so the condition $\tfrac{2}{n}\sum_{i=1}^{n} L_i^2 \le \lambda^2$ in \Cref{thm:our_faster} does not hold. However, the third subplot of  \Cref{pic:compare} indicates that the linear convergence rate with $\eta_1$ remains faster. The reason is that the condition in \Cref{thm:our_faster} is sufficient but not necessary. Thus, the step size $\eta_1$ may still improve convergence rate even when the DG-term provides less strong convexity than the corollary requires.

\subsubsection{Radius of Convergence Region}
Next, we study how the Lipschitz constant for the gradient of the DG-term $h(\boldsymbol{x})$ affects the radius of the convergence region when the step size $\eta_1$ is used.  
In each experiment, we let the algorithm run for a sufficiently long time after the iteration point had entered the convergence region.  
The largest value of $\frac{\|\boldsymbol{x}_k-\boldsymbol{x}^*\|_2}{\|\boldsymbol{x}_1-\boldsymbol{x}^*\|_2}$ over the last $20\%$ of the iterations is taken as an approximation of the relative radius (namely the radius normalized by the initial value), denoted by $R_{\text{rel}}$. Table \ref{tab:1} reports $R_{\text{rel}}$ for a wider range of $L_h$.

\begin{table}[h]
	\centering
	\caption{Relative radius $R_{\text{rel}}$ ($\times 10^{-4}$) of convergence regions for different $L_h$.}\label{tab:1}
{
	\setlength{\tabcolsep}{5pt}
	\begin{tabular}{cccccccccc}
		\toprule
		$L_h$ & 1000 & 500 & 100 & 50 & 25 & 20 & 10 & 5 & 1 \\
		\midrule
		$R_{\text{rel}}$ ($\times 10^{-4}$) & 0.498 & 0.980 & 4.085 & 6.078 & 7.053 & 7.098 & 6.427 & 5.218 & 2.417 \\
		\bottomrule
	\end{tabular}
}

\end{table}

Overall, $R_{\text{rel}}$ reaches its maximum at $L_h=20$ in this experiment, and then follows two distinct trends.  
When $L_h>20$, $R_{\text{rel}}$ decreases as $L_h$ grows, indicating that
once the DG-term provides sufficient strong convexity, the noise introduced by stochastic gradients is well controlled and higher accuracy is achieved.  
When $L_h<20$, $R_{\text{rel}}$ also falls as $L_h$ decreases.  
In this range, the DG-term has less impact than the SG-term, and our step size becomes conservative, which again leads to a smaller radius.  
These observations indicate that the step size $\eta_1$, which sufficiently considers the impact of the DG-term, can guarantee the accuracy achieved by the algorithm to some extent.

Recalling the expression of $\bar{R}$ in \eqref{eq:R_opt}, we have $\lambda=L_h$ for logistic regression, so \eqref{eq:R_opt} reduces to
\[
\bar{R}=\frac{2\sigma_{\scriptscriptstyle F}}{\Big(\frac{2}{n^{2}}\sum\limits_{i=1}^{n}L_i^{2}\sum\limits_{i=1}^{n}L_i\Big)\frac{1}{L_h}+\frac{2}{n}\sum\limits_{i=1}^{n}L_i^{2}+\Big(\frac{1}{n}\sum\limits_{i=1}^{n}L_i\Big)L_h+L_h^{2}}
\]
in this experiment. With $\sigma_{\scriptscriptstyle F}$ and $L_i$, $i=1,\ldots,n$, fixed, the right-hand side can be seen as a function of $L_h$.  
Since $\sigma_{\scriptscriptstyle F}$, $L_i$, $i=1,\ldots,n$, and $L_h$ are non-negative, this function attains its maximum for some $L_h$, which partly explains the phenomenon observed above.

\subsection{Impact of Batch Sizes}
In this subsection, we set $L_h = 10$ and choose the step size according to \eqref{eq:eta_opt}.  
To show how the algorithm behaves when $B \to \infty$, we run experiments for $B = 10$, $100$, and $1000$, and compare the mini-batch SGD with the classic gradient descent.  
The results are presented in Fig. \ref{pic:batch}.

\begin{figure}[h]
	\centering
	\includegraphics[width=0.7\textwidth]{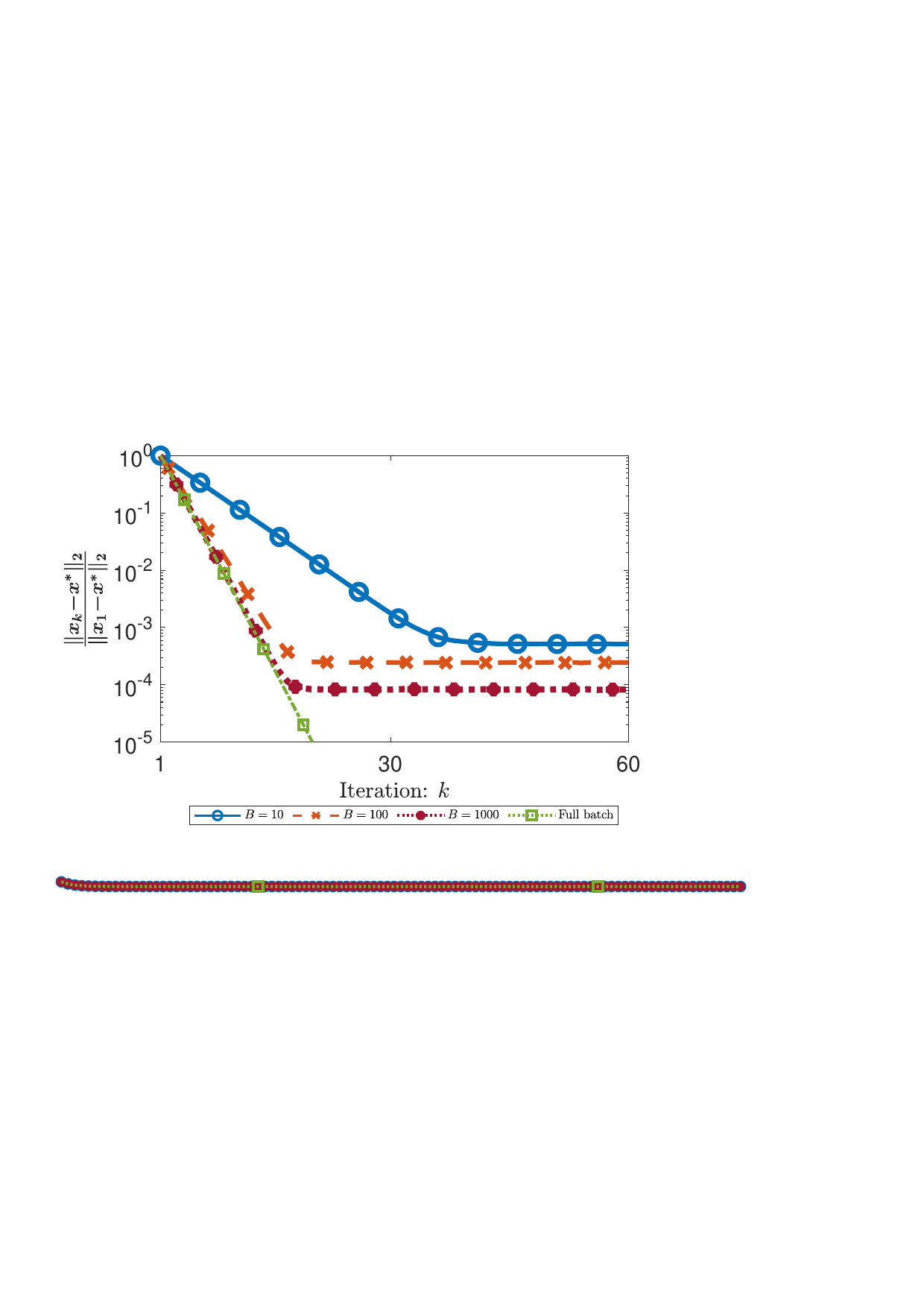}
	\caption{Convergence of classic gradient descent and mini-batch SGD under different batch sizes.}\label{pic:batch}
\end{figure}

In \Cref{pic:batch}, the green curve with square markers shows the decay of $\frac{\|\boldsymbol{x}_k - \boldsymbol{x}^*\|_2}{\|\boldsymbol{x}_1 - \boldsymbol{x}^*\|_2}$ using gradient descent with the step size $2/(\frac{1}{n}\sum_{i=1}^n L_i + L_h + \lambda)$ from \cite{nesterov2018lectures}.  
The remaining curves represent the mini-batch SGD with the three batch sizes.  
We observe that the mini-batch SGD converges to a region around $\boldsymbol{x}^*$ which shrinks as the batch size increases.  
Moreover, increasing $B$ causes the slope of the linear part of each mini-batch SGD curve to approach that of the gradient descent curve, showing that the convergence rate of the mini-batch SGD approaches the rate of the gradient descent when the batch size increases.  
These observations empirically confirm that \Cref{thm:1} asymptotically matches the classic linear convergence result of the gradient descent in expectation as $B \to \infty$.

\section{Conclusions}\label{sec:6}
Do the stochastic approximation of $\nabla F(\boldsymbol{x})$ and the deterministic computation of $\nabla h(\boldsymbol{x})$ impact the performance of  mini-batch SGD on the problem \eqref{eq:opt1} differently? We theoretically answer this question by capturing their asymmetric roles, under the assumptions that
$\psi(\boldsymbol{x})$ is strongly convex, with $\nabla f_i(\boldsymbol{x})$, $i=1,\dots,n$, and $\nabla h(\boldsymbol{x})$ being Lipschitz continuous. The main theorem proves the linear convergence of $\{\boldsymbol{x}_k\}$ to a neighborhood of the minimizer. The step size, the convergence rate, and the radius of the convergence region depend on the Lipschitz constants of $\nabla f_i(\boldsymbol{x})$, $i=1,\ldots,n$, and $\nabla h(\boldsymbol{x})$ asymmetrically, thereby showing their different influence. This analysis yields sharper theoretical guarantees, including a faster convergence rate and a smaller convergence region. An even faster rate is achieved  when $h(\boldsymbol{x})$ contributes enough strong convexity to $\psi(\boldsymbol{x})$. Numerical experiments support our theory.

Future work includes exploring whether the asymmetry between stochastic and deterministic gradient computations also arises under weaker smoothness assumptions, such as non-smooth settings. In addition, whether the similar asymmetric analysis could be applied to other algorithms?

\bibliographystyle{siamplain}
\bibliography{references}
\end{document}